\theoremstyle{plain}
        \newtheorem{theorem}{Theorem}[section]
        \newtheorem{lemma}[theorem]{Lemma}
        \newtheorem{corollary}[theorem]{Corollary}
        \newtheorem{remark}[theorem]{Remark}
\numberwithin{equation}{section}
\newcommand \be     {\begin{equation}}
\newcommand \ee     {\end{equation}}
\newcommand \del        \partial
\newcommand \eps        \varepsilon
\newcommand \auth   \textsc
\begin{document}

\title{A Conformally Invariant Classification Theorem in Four Dimensions}
\author{Bing-Long Chen,  Xi-Ping Zhu\footnote{ Department of Mathematics, Sun Yat-Sen University,
Guangzhou, P. R. of China. E-mails: {\sl mcscbl@mail.sysu.edu.cn., stszxp@mail.sysu.edu.cn., }
\newline
2000\textit{\ AMS Subject Classification:} 53C25, 53C44.
\newline
\textit{Key Words:}  {4-manifolds, Yamabe invariant, classification theorem } }
}

\date{}

\maketitle

\begin{abstract}
In this paper, we prove a classification theorem of 4-manifolds according to some conformal invariants, which generalizes the conformally invariant sphere theorem of Chang-Gursky-Yang \cite{CGY}. Moreover, it provides a four-dimensional analogue of the well-known classification theorem of Schoen-Yau \cite{SY2} on 3-manifolds with positive Yamabe invariants.
\end{abstract}


\section{Introduction}

\label{IN-0} One of the main themes in geometry is to classify the topology of the manifolds in terms of certain topological or differential invariants. In dimension 2, the invariant can be taken to be the Euler characteristic. The classical uniformization theorem implies that if the Euler characteristic of a closed surface is positive, then the surface must be diffeomorphic to the sphere $S^2$ or the real projective space $RP^2.$   Note that the Gauss-Bonnet theorem expresses the Euler characteristic number $\chi(M^2)$  of a closed surface $M^2$ as an integral of the Gaussian curvature, i.e., $$\frac{1}{2\pi}\int_{M^2} K d\sigma=\chi(M^2).$$  In dimension $\geq 3$, one can consider an analogous invariant, called Yamabe invariant, as follows.

Fix a
 differentiable manifold $M^n$ of real dimension $n$ without boundary. Given a metric $g$ on $M^n,$  let $\mathcal{C}_g=\{\rho g\mid \rho>0\}$ be the class of metrics conformal
 to a fixed metric $g.$  Define
 \be \label{yyb'} \mathcal{Y}(M^n,\mathcal{C}_g)=\inf\limits_{g'\in
\mathcal{C}_g}\frac{\int_{M^n}R_{g'}dv_{g'}}{(\int_{M^n}dv_{g'})^{\frac{n-2}{n}}}
\ee
 and the Yamabe invariant $\mathcal{Y}(M^n)$ of the manifold is defined to be $$
\mathcal{Y}(M^n)=\sup_{\mathcal{C}}\mathcal{Y}(M^n,\mathcal{C}),
$$
 where the superum is taken over all conformal classes of Riemannian metrics.
Particularly in dimension 2, $\mathcal{Y}(M^2)=2\pi \chi(M^2)$ by Gauss-Bonnet theorem.

 The following well-known theorem due to Schoen-Yau \cite{SY2} gives a complete classification of compact three-dimensional manifolds with positive Yamabe invariant $\mathcal{Y}(M^3).$

 \begin{theorem}(Schoen-Yau \cite{SY2}, Perelman \cite{P2})\label{tschyp}

Let $M^3$ be a compact three-dimensional manifold with $\mathcal{Y}(M^3)>0.$
Then $M^3$ is diffeomorphic to a $S^3/ \Gamma $ or $(R\times S^2)/\Gamma'$ or a connected sum of several of these manifolds,
where $\Gamma$ is a discrete subgroup of the isometries of $S^3,$ $\Gamma'$ is cocompact discrete subgroup of the isometric group
of $R\times S^2.$
\end{theorem}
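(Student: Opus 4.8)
The plan is to reduce the statement to a purely topological classification of closed three-manifolds that admit a metric of positive scalar curvature, and then to assemble the two deep ingredients named in the statement. First I would record the equivalence: $\mathcal{Y}(M^3)>0$ holds if and only if $M^3$ carries a metric of strictly positive scalar curvature. By definition $\mathcal{Y}(M^3)>0$ means that some conformal class $\mathcal{C}_g$ has $\mathcal{Y}(M^3,\mathcal{C}_g)>0$; the resolution of the Yamabe problem (Yamabe, Trudinger, Aubin, Schoen) provides in each conformal class a metric of constant scalar curvature whose sign agrees with that of the conformal invariant $\mathcal{Y}(M^3,\mathcal{C}_g)$, so positivity of the invariant yields a metric $g_0$ with $R_{g_0}>0$. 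The converse is a standard consequence of the conformal invariance of the sign of the Yamabe constant. Hence the theorem is equivalent to classifying the closed three-manifolds that admit a metric of positive scalar curvature.

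Next I would set up the topological framework. By the Kneser--Milnor theorem $M^3$ decomposes as a connected sum $P_1\#\cdots\#P_k$ of prime factors, and conversely, by the surgery constructions of Gromov--Lawson and Schoen--Yau (positive scalar curvature is preserved under surgery in codimension $\ge 3$, hence under connected sum), any connected sum of the listed building blocks carries a metric of positive scalar curvature; this already gives the easy direction. In the other direction, the minimal-surface obstruction of Schoen--Yau shows that no closed aspherical three-manifold admits positive scalar curvature, which rules out the hyperbolic, flat, and other aspherical geometric pieces. Thus the problem is reduced to showing that the prime factors compatible with positive scalar curvature are exactly the spherical space forms $S^3/\Gamma$ and the $S^2\times\R$-quotients $(\R\times S^2)/\Gamma'$.

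Finally comes the analytic heart, which is exactly Perelman's contribution. Starting from $g_0$, I would run the Ricci flow with surgery. In dimension three the scalar curvature satisfies $\del_t R \ge \Delta R + \frac{2}{3}R^2$, so the maximum principle propagates positivity and forces $\min R$ to become infinite in finite time; consequently the flow with surgery becomes extinct after finitely many steps. The structure theory of the high-curvature regions --- the canonical neighborhood theorem, built on the classification of three-dimensional $\kappa$-solutions --- shows that every surgery is performed along a two-sphere at the center of an $\eps$-neck modeled on $S^2\times\R$, splitting off a component diffeomorphic to $S^3/\Gamma$ or to $(\R\times S^2)/\Gamma'$, and that the components disappearing at the extinction time are themselves of these two types. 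Reversing the finitely many surgeries then reconstructs each prime factor, and hence $M^3$, as a connected sum of the stated manifolds. I expect the main obstacle to lie precisely here: proving the canonical neighborhood theorem and the classification of $\kappa$-solutions, together with the no-local-collapsing and finite-extinction estimates that control the surgery process. This is the analytic core of Perelman's resolution of the elliptization and geometrization conjectures, and it is the step that upgrades the classical positive-scalar-curvature results into the complete diffeomorphism classification asserted above.
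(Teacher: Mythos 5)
The paper offers no proof of this theorem --- it is stated as background and attributed to Schoen--Yau and Perelman --- so there is no internal argument to compare yours against. Your outline correctly identifies the two ingredients behind the citation: the resolution of the Yamabe problem converts $\mathcal{Y}(M^3)>0$ into the existence of a metric of positive scalar curvature, and Perelman's Ricci flow with surgery (finite extinction forced by $\partial_t R\ge \Delta R+\tfrac{2}{3}R^2$, the canonical neighborhood theorem, and the identification of the surgered and extinct pieces) upgrades the Schoen--Yau structure results to the full diffeomorphism classification. As a citation-level justification the proposal is sound, with all of the substantive content deferred, necessarily, to the quoted sources.
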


 When dimension $n\geq 4,$  the Yamabe invariant alone is too weak to control the topology of the manifolds. One needs additional assumptions to investigate the topology of the manifolds with positive Yamabe invariant $\mathcal{Y}(M)$.

In 2003, S. Y. A. Chang, M. Gursky and P. Yang \cite{CGY} proved a conformally invariant sphere theorem in dimension 4.
In their theorem, besides the positivity of the Yamabe invariant $\mathcal{Y}(M^4),$ they assumed the Weyl curvature is suitably controlled in $L^2$ sense by the Euler characteristic $\chi(M^4)$ of the manifold.

Before the precise statement of their result, we recall some basic materials in dimension 4. A local orientation of a Riemannian 4-manifold gives  a splitting
$\Lambda^{2}=\Lambda^2_{+}\oplus \Lambda^2_{-}$ of 2-forms into
self-dual and anti-self dual 2-forms. Thus the curvature operator has a block
decomposition
$$
Rm=\left(\begin{array}{cc}
A&B\\ {}^tB&C\end{array}\right)$$
where $A=\frac{R}{12}+W_{+},$  $C=\frac{R}{12}+W_{-},$ $B=\overset{\circ}{Ric}.$ Here $W_{+}$ and $W_{-}$ are the self-dual and anti-self-dual Weyl curvature tensors respectively, while $\overset{\circ}{Ric}$ is the trace free part of the Ricci curvature tensor.
The Gauss-Bonnet-Chern theorem says
\begin{equation}\label{GB}
\begin{split}
\frac{1}{8\pi^2}\int_{M^4}|W_{+}|^2+ |W_{-}|^2+\frac{R^2}{24}-\frac{|\overset{\circ}{Ric}|^2}{2}dvol_g= \chi(M^4).
\end{split}
\end{equation}

We can now state the conformally invariant sphere theorem of Chang-Gursky-Yang:

\begin{theorem}(Chang-Gursky-Yang \cite{CGY})\label{CGY}

Let $(M^4,g)$ be a compact four-dimensional Riemannian manifold. Suppose we have
\begin{equation}\label{cip}
\begin{split}
& \ (i)\ \ \ \ \mathcal{Y}(M^4,\mathcal{C}_g)>0,\\
& \ (ii)\int_{M^4}|W_{+}|^2+ |W_{-}|^2dvol_g<4\pi^2 \chi(M^4).
\end{split}
\end{equation}
Then $M^4$ is diffeomorphic to a  $S^4$ or $RP^4.$
\end{theorem}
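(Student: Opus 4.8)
The plan is to translate the two hypotheses into conformally invariant data, to promote that data via a fully nonlinear elliptic equation into a metric with pointwise curvature pinching, and then to read off the topology from the curvature.

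First I would rewrite hypothesis (ii) using the Gauss-Bonnet-Chern identity \eqref{GB}. Writing $A=\tfrac12\big(\mathrm{Ric}-\tfrac{R}{6}g\big)$ for the Schouten tensor and $\sigma_2(A)=\tfrac12\big((\mathrm{tr}\,A)^2-|A|^2\big)$ for its second elementary symmetric function, a direct computation gives $\tfrac{R^2}{24}-\tfrac{|\overset{\circ}{Ric}|^2}{2}=4\,\sigma_2(A)$, so \eqref{GB} becomes
\[
8\pi^2\chi(M^4)=\int_{M^4}\big(|W_{+}|^2+|W_{-}|^2\big)\,dvol_g+4\int_{M^4}\sigma_2(A)\,dvol_g .
\]
Hence hypothesis (ii) is exactly equivalent to the conformally invariant inequality $\int_{M^4}\sigma_2(A)\,dvol_g>\tfrac14\int_{M^4}(|W_{+}|^2+|W_{-}|^2)\,dvol_g\ge 0$; in particular $\int_{M^4}\sigma_2(A)\,dvol_g>0$. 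Together with (i) I now have two scale- and conformally-invariant facts: positivity of the Yamabe invariant and positivity of $\int\sigma_2(A)$.

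The core step, and the one I expect to be the main obstacle, is analytic: from these integral conditions I would produce a conformal representative $\tilde g=e^{2u}g$ whose Schouten tensor lies pointwise in the positive cone $\Gamma_2^+=\{\sigma_1>0,\ \sigma_2>0\}$. Since (i) lets me take $R_g>0$, so that $A\in\Gamma_1^+$, the task is to solve the fully nonlinear $\sigma_2$-Yamabe equation $\sigma_2(A_{\tilde g})=f>0$ while keeping $\tilde g$ admissible along the whole deformation. This requires a continuity or degree argument together with the delicate $C^0$, $C^1$, $C^2$ a priori estimates for the $\sigma_2$ operator that prevent the solution from reaching the boundary of $\Gamma_2^+$; it is precisely the positivity of $\int\sigma_2$ that should rule out such degeneration. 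A second subtlety enters later: the hypotheses only bound the \emph{conformally invariant integral} of the Weyl energy, not $|W|$ pointwise, so the argument must upgrade this to a usable pointwise control.

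Granting such a metric, the conclusion follows in two moves. Algebraically, in dimension four $A\in\Gamma_2^+$ forces $\mathrm{Ric}>0$: writing $\mathrm{Ric}=2A+\sigma_1(A)g$ with eigenvalues $\lambda_1\le\cdots\le\lambda_4$ of $A$, the least Ricci eigenvalue is $2\lambda_1+\sigma_1$, and if this were $\le 0$ then $\lambda_1\le-\tfrac13(\lambda_2+\lambda_3+\lambda_4)$, which together with the three-variable bound $\lambda_2\lambda_3+\lambda_2\lambda_4+\lambda_3\lambda_4\le\tfrac13(\lambda_2+\lambda_3+\lambda_4)^2$ yields $\sigma_2(A)\le 0$, a contradiction; note also that $\sigma_2(A)>0$ is exactly the pinching $|\overset{\circ}{Ric}|^2<\tfrac{R^2}{12}$. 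With this positively-curved, pinched metric I would run Hamilton's Ricci flow: positive Ricci curvature and the pointwise pinching are preserved and improved, while the conformally invariant Weyl bound controls the remaining trace-free energy, so by the Hamilton–Margerin convergence theory the normalized flow converges to a metric of constant positive sectional curvature. Thus $M^4$ is a spherical space form $S^4/\Gamma$. Finally, since $S^4$ is even-dimensional, any orientation-preserving self-map has Lefschetz number $2\neq0$ and hence a fixed point; so in a free action every nontrivial element must reverse orientation, which forces $\Gamma$ to have order at most two. Therefore $\Gamma$ is trivial or the antipodal $\mathbb{Z}/2$, giving $M^4\cong S^4$ or $RP^4$.
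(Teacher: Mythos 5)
You are reconstructing the original Chang--Gursky--Yang argument, which is a genuinely different route from the one this paper takes, and as written it has one concrete gap beyond the (honestly flagged) omission of the a priori estimates for the fully nonlinear equation. Producing a conformal metric with $A_{\tilde g}\in\Gamma_2^+$ pointwise is not enough to launch the final Ricci-flow step: Margerin's theorem \cite{Mar} requires the pointwise pinching (\ref{ppcn}), i.e. $|W_+|^2+|W_-|^2<4\sigma_2(A)$ at every point, and an integral bound on the Weyl energy gives no pointwise control and is not usefully preserved along the flow. Your sentence ``the conformally invariant Weyl bound controls the remaining trace-free energy'' is exactly where the argument breaks. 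The fix in \cite{CGY} is to build the Weyl density into the right-hand side of the fully nonlinear equation: one solves (a regularization of) $4\sigma_2(A_{\tilde g})=\bigl(|W_+|^2+|W_-|^2\bigr)_{\tilde g}+c$ with $c>0$, which is solvable precisely because hypothesis (ii) makes the relevant conformal invariant positive, so that the solution metric satisfies (\ref{ppcn}) \emph{before} the flow starts. Your algebraic ingredients ($4\sigma_2(A)=\tfrac{R^2}{24}-\tfrac12|\overset{\circ}{Ric}|^2$, $\Gamma_2^+\Rightarrow Ric>0$) and the concluding Lefschetz/space-form step are correct.

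For comparison, the paper's own proof avoids the fully nonlinear PDE entirely: by Gauss--Bonnet--Chern (\ref{GB}), hypothesis (ii) is equivalent to (\ref{eqc}); evaluating the conformally invariant right-hand side at the constant-scalar-curvature Yamabe metric gives $\int_{M^4}|W_+|^2+|W_-|^2\,dvol_g<\mathcal{Y}(M^4,\mathcal{C}_g)^2/24$, and the trace-free eigenvalue inequality (\ref{esr}) converts this into hypothesis (ii) of Theorem \ref{t1.1'}. That theorem (proved via the \emph{semilinear} generalized Yamabe problem, which produces a metric of positive isotropic curvature, followed by the classification of \cite{CTZ}) yields the connected-sum decomposition, and $\chi(M^4)>0$ forces $k=0$ and $b_2=0$, leaving $S^4$ or $RP^4$. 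So where you would solve a Monge--Amp\`ere-type equation and invoke Margerin's convergence theorem, the paper solves only a linear-in-the-highest-order-term equation and quotes a classification theorem; the trade-off is that the paper's route leans on the heavy surgery machinery behind \cite{CTZ} rather than on \cite{CGY1} and \cite{CGY2}, and in exchange it also covers manifolds with $\chi(M^4)\le 0$.
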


Note that the condition (ii) (and (i)) is invariant under conformal change of the metric. We emphasize that the norms $|W_{+}|,$  $|W_{-}|$ used here are one half  of the norms for Weyl tensors used in the paper \cite{CGY}.  Moreover, in the same paper \cite{CGY}, they also obtained the following rigidity theorem that shows the pinching condition (ii) is sharp.
\begin{theorem}(Chang-Gursky-Yang \cite{CGY})\label{CGYR}

Let $(M^4,g)$ be a compact four-dimensional Riemannian manifold which is not diffeomorphic to $S^4$ or $RP^4.$ Suppose we have
\begin{equation}\label{cip}
\begin{split}
& \ (i)\ \ \ \ \mathcal{Y}(M^4,\mathcal{C}_g)>0,\\
& \ (ii)\int_{M^4}|W_{+}|^2+ |W_{-}|^2dvol_g=4\pi^2 \chi(M^4).
\end{split}
\end{equation}
Then one of the following must be true:\\
1)  $(M^4, g)$ is conformal to $CP^2$ with the Fubini-Study metric , or\\
2)  $(M^4, g)$  is conformal to a manifold which is isometrically covered by $S^3\times S^1$ endowed with
the standard product metric.
\end{theorem}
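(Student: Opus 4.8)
The plan is to convert the borderline hypothesis (ii) into a conformally invariant pointwise identity, pass to the canonical representative supplied by the machinery behind Theorem \ref{CGY}, and then read off the two model geometries from the equality cases.

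\textbf{Step 1 (Gauss--Bonnet reduction).} Combining the Gauss--Bonnet--Chern identity (\ref{GB}) with the equality in (ii) and subtracting, I would first record the conformally invariant integral identity
$$\int_{M^4}\Big(|W_{+}|^2+|W_{-}|^2+\tfrac{1}{2}|\overset{\circ}{Ric}|^2-\tfrac{R^2}{24}\Big)\,dvol_g=0 .$$
Since $\tfrac{R^2}{24}-\tfrac{1}{2}|\overset{\circ}{Ric}|^2=4\,\sigma_2(P)$, where $P=\tfrac{1}{2}\big(Ric-\tfrac{R}{6}g\big)$ is the Schouten tensor and $\sigma_2$ its second elementary symmetric function, this says $\int_{M^4}\sigma_2(P)\,dvol_g=\tfrac{1}{4}\int_{M^4}(|W_{+}|^2+|W_{-}|^2)\,dvol_g$. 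In particular $\chi(M^4)=\tfrac{1}{4\pi^2}\int_{M^4}(|W_{+}|^2+|W_{-}|^2)\,dvol_g\ge0$, and I would split the argument into the cases $\chi=0$ and $\chi>0$.

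\textbf{Step 2 (canonical representative).} By the conformal invariance of (i) and (ii) I may deform freely within $\mathcal{C}_g$. I would invoke the fully nonlinear ($\sigma_2$-Yamabe) deformation underlying \cite{CGY}: hypothesis (i) together with the borderline (ii) still produces a representative $\tilde g\in\mathcal{C}_g$ whose Schouten tensor lies in the closure $\overline{\Gamma_2^{+}}$ of the positive $2$-cone, i.e.\ $\sigma_1(P_{\tilde g})\ge0$ and $\sigma_2(P_{\tilde g})\ge0$ pointwise. Using the algebraic fact that membership in $\Gamma_2^{+}$ forces positive Ricci curvature in dimension four (so its closure forces $Ric_{\tilde g}\ge0$), the representative has nonnegative Ricci curvature; positivity of the Yamabe invariant moreover excludes the Ricci-flat degeneration and yields $R_{\tilde g}>0$.

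\textbf{Step 3 (rigidity, two cases).} If $\chi=0$, then $\int_{M^4}\sigma_2(P_{\tilde g})=0$ with $\sigma_2\ge0$ pointwise, hence $\sigma_2(P_{\tilde g})\equiv0$ and $|W_{+}|^2+|W_{-}|^2\equiv0$: the metric is locally conformally flat with $Ric_{\tilde g}\ge0$ and $R_{\tilde g}>0$. Here conformal flatness writes the curvature as a Kulkarni--Nomizu product of $P_{\tilde g}$ with $\tilde g$, and the relations $\sigma_2(P_{\tilde g})\equiv0$, $\sigma_1(P_{\tilde g})>0$ together with the second Bianchi identity and de Rham's theorem force a local isometric splitting $S^3\times\R$; the compact quotient is then isometrically covered by the standard product $S^3\times S^1$, which is case 2). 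If $\chi>0$, then $|W_{+}|^2+|W_{-}|^2\not\equiv0$, and I would instead saturate the \emph{pointwise} curvature positivity (phrased through the blocks $A,B,C$) whose integral drives the conclusion of Theorem \ref{CGY}. Since $M^4$ is assumed not diffeomorphic to $S^4$ or $RP^4$, this inequality cannot be strict everywhere, and its equality case should force $\overset{\circ}{Ric}_{\tilde g}\equiv0$ (Einstein) together with the vanishing of one half of the Weyl tensor, say $W_{-}\equiv0$ after fixing the orientation. A positive-scalar-curvature self-dual Einstein four-manifold realizing this borderline is, by the rigidity theory of such metrics, conformal to $CP^2$ with the Fubini--Study metric, which is case 1).

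\textbf{Main obstacle.} The crux is the case $\chi>0$. One must first carry the solvability of the fully nonlinear equation all the way to the boundary of the cone, with enough regularity and nondegeneracy to upgrade the integral identity of Step 1 into a genuine pointwise statement, and then isolate the exact pointwise curvature inequality behind \cite{CGY} and analyze its equality case, extracting simultaneously the Einstein condition and the half-flatness $W_{-}\equiv0$. The final identification with the Fubini--Study $CP^2$ rests on the rigidity theory of positive self-dual Einstein four-manifolds and is itself substantial. By contrast the conformally flat case $\chi=0$ is comparatively soft, the only real points being the splitting argument and the verification, using $R_{\tilde g}>0$, that the three-dimensional factor is the round $S^3$.
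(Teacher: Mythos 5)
This statement is quoted in the paper as a theorem of Chang--Gursky--Yang (\cite{CGY}) and is given \emph{no proof} there --- it appears only as background motivating Theorem \ref{t1.1''} --- so there is no in-paper argument to compare yours against; your proposal has to be judged against the actual proof in \cite{CGY}. Your Step 1 is correct and is indeed how the hypothesis is normalized: with $4\sigma_2(A)=\tfrac{R^2}{24}-\tfrac12|\overset{\circ}{Ric}|^2$, condition (ii) becomes $\int_{M^4}\sigma_2(A)\,dvol=\tfrac14\int_{M^4}(|W_+|^2+|W_-|^2)\,dvol$ and forces $\chi\ge 0$. The overall architecture (conformally invariant identity, canonical representative from the fully nonlinear equation, equality-case rigidity) is also the right one.

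The genuine gap is in Step 2, and it propagates into Step 3. In the borderline case the existence of a representative $\tilde g$ with $A_{\tilde g}\in\overline{\Gamma_2^+}$, i.e.\ a ($C^{1,1}$) solution of the degenerate equation $\sigma_2(A_{\tilde g})=\tfrac14|W|^2_{\tilde g}$, is not automatic: the actual argument in \cite{CGY} regularizes the equation, and the $\varepsilon\to 0$ limit presents a dichotomy --- either the regularized solutions stay bounded and converge to such a $\tilde g$, or they concentrate, in which case the blow-up analysis produces a conformal diffeomorphism onto $S^4$ or $RP^4$. The hypothesis that $M^4$ is \emph{not} diffeomorphic to $S^4$ or $RP^4$ is consumed precisely to exclude the second alternative; in your write-up it is never used in Step 2, where it is indispensable, and is instead invoked vaguely in Step 3 to "prevent the inequality from being strict everywhere," which is not how the argument runs. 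Relatedly, in the case $\chi>0$ you assert that the equality case "should force" $\overset{\circ}{Ric}_{\tilde g}\equiv 0$ and $W_-\equiv 0$; this is the second substantial piece of \cite{CGY} (a Lagrange-multiplier/critical-metric computation combined with an integral Weitzenb\"ock argument for the degenerate solution, which has only limited regularity), and nothing in your sketch produces it. Once Einstein plus half-conformal-flatness with $R>0$ is in hand, the appeal to Hitchin's classification of positive self-dual Einstein four-manifolds to land on $CP^2$ is fine, and the locally conformally flat case $\chi=0$ is handled essentially as you describe. So the proposal is an accurate roadmap of the known proof rather than a proof: the two steps you flag as "obstacles" are exactly the content of the theorem.
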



It is obvious that the Euler characteristic $\chi(M^4)$ in the Theorem \ref{CGY}  has to be positive; and in the rigidity Theorem \ref{CGYR}, $\chi(M^4)$ is non-negative.  One of the main goal of this paper is to generalize the above sharp conformally invariant sphere theorem  to manifolds with possibly non-positive Euler characteristic. We state our first result in the following theorem:

\begin{theorem}\label{t1.1'}
Let $(M^4,g)$ be a compact four-dimensional Riemannian manifold.  Suppose we have
\begin{equation}\label{cip}
\begin{split}
& \ (i)\ \ \ \ \mathcal{Y}(M^4,\mathcal{C}_g)>0,\\
& \ (ii)\int_{M^4}[\max\{\lambda_{\max}(W_{+}),\lambda_{\max}(W_{-})\}]^2 dvol_g<\frac{1}{36}\mathcal{Y}(M^4,\mathcal{C}_g)^2,
\end{split}
\end{equation}
where  $\lambda_{\max}(W_{+})$ (or $\lambda_{\max}(W_{-})$)
is the largest eigenvalue of the Weyl operator $W_{+}$ (or $W_{-}$) acting on the
(anti-)self-dual 2-forms.
Then $M^4$ is diffeomorphic to a connected sum  $$S^4\# m RP^4\#
(R\times S^3)/\Gamma_1\#\cdots (R\times S^3)/\Gamma_k,$$
for  $m=0,$ or $1$ and some nonnegative integer $k$, where each $\Gamma_i$ is a cocompact discrete subgroup of the isometric group
of $R\times S^3.$
\end{theorem}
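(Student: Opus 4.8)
The plan is to produce, within the conformal class $\mathcal{C}_g$, a metric of positive isotropic curvature, and then to invoke the classification of compact four-manifolds with positive isotropic curvature obtained through Ricci flow with surgery. The bridge between these two halves is a purely algebraic fact in dimension four: using the block form of the curvature operator above, the isotropic curvature of an orthonormal four-frame $(e_1,e_2,e_3,e_4)$ equals $R_{1313}+R_{1414}+R_{2323}+R_{2424}-2R_{1234}$, and expanding $e_i\wedge e_j=\tfrac{1}{\sqrt2}(\omega^{+}\pm\omega^{-})$ into self-dual and anti-self-dual parts collapses this, after minimizing over all frames and both orientations, to $\tfrac{R}{2}-2\lambda_{\max}(A)$ and $\tfrac{R}{2}-2\lambda_{\max}(C)$. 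Since $\lambda_{\max}(A)=\tfrac{R}{12}+\lambda_{\max}(W_{+})$ and $\lambda_{\max}(C)=\tfrac{R}{12}+\lambda_{\max}(W_{-})$, a metric has positive isotropic curvature if and only if
\[
\max\{\lambda_{\max}(W_{+}),\lambda_{\max}(W_{-})\}<\tfrac{1}{6}R\quad\text{pointwise.}
\]
Thus condition (ii) is, by design, an $L^{2}$ version of this pointwise inequality, and the task is to upgrade it to a pointwise statement by a conformal change.

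For the upgrade I would introduce the modified conformal Laplacian
\[
\tilde L:=-6\Delta_{g}+\big(R_{g}-6\max\{\lambda_{\max}(W_{+}),\lambda_{\max}(W_{-})\}\big),
\]
and note that, via the conformal transformation laws $R_{\tilde g}=u^{-3}(-6\Delta_{g}u+R_{g}u)$ and $\lambda_{\max}(W_{\pm})_{\tilde g}=u^{-2}\lambda_{\max}(W_{\pm})_{g}$ for $\tilde g=u^{2}g$, finding a conformal metric with pointwise positive isotropic curvature is exactly equivalent to finding $u>0$ with $\tilde L u>0$. Such $u$ exists precisely when the bottom eigenvalue $\mu_{1}(\tilde L)$ is positive, in which case one takes $u$ to be the positive first eigenfunction. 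The crucial point, which pins down the constant $\tfrac{1}{36}$, is that $\mu_{1}(\tilde L)>0$ is a formal consequence of (i) and (ii): the definition of $\mathcal{Y}(M^{4},\mathcal{C}_g)$ gives the Sobolev-type bound $\int_{M}(6|\nabla\psi|^{2}+R_{g}\psi^{2})\,dv_{g}\ge \mathcal{Y}(M^{4},\mathcal{C}_g)\big(\int_{M}\psi^{4}\,dv_{g}\big)^{1/2}$, while Cauchy--Schwarz gives $\int_{M}6\max\{\lambda_{\max}(W_{+}),\lambda_{\max}(W_{-})\}\psi^{2}\,dv_{g}\le 6\big(\int_{M}[\max\{\lambda_{\max}(W_{+}),\lambda_{\max}(W_{-})\}]^{2}\,dv_{g}\big)^{1/2}\big(\int_{M}\psi^{4}\,dv_{g}\big)^{1/2}$; subtracting and using (ii) together with $\mathcal{Y}(M^{4},\mathcal{C}_g)>0$ yields $\int_{M}(6|\nabla\psi|^{2}+(R_{g}-6\max\{\lambda_{\max}(W_{+}),\lambda_{\max}(W_{-})\})\psi^{2})\,dv_{g}>0$ for every $\psi\not\equiv 0$, i.e. $\mu_{1}(\tilde L)>0$.

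Carrying this out needs a short regularity step: the potential $\max\{\lambda_{\max}(W_{+}),\lambda_{\max}(W_{-})\}$ is only Lipschitz, but being Hölder continuous it falls within the scope of Schauder theory, so the first eigenfunction is $C^{2,\alpha}$ and strictly positive by the strong maximum principle; hence $\tilde g=u^{2}g$ is a $C^{2,\alpha}$ metric satisfying the strict pointwise inequality, and therefore has positive isotropic curvature. Since positive isotropic curvature is an open condition and $M^{4}$ is compact, $\tilde g$ can be approximated in $C^{2}$ by a smooth metric that still has positive isotropic curvature, which I would then feed into the classification of compact four-manifolds with positive isotropic curvature; through Ricci flow with surgery (following Hamilton and the authors) this produces exactly the connected-sum description $S^{4}\# mRP^{4}\#(R\times S^{3})/\Gamma_{1}\#\cdots\#(R\times S^{3})/\Gamma_{k}$ with $m\in\{0,1\}$. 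I expect the genuine difficulty to lie entirely in this final step, the surgery classification being the deep analytic input, whereas the conformal reduction above is elementary once the sharp constant is recognized; the only secondary technicality to monitor is that the eigenfunction regularity and the subsequent smoothing preserve the strict curvature inequality.
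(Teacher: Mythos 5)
Your proposal is correct, and its overall strategy coincides with the paper's: use the sharp algebraic fact that positive isotropic curvature in dimension four is equivalent to the pointwise inequality $\max\{\lambda_{\max}(W_{+}),\lambda_{\max}(W_{-})\}<\tfrac{R}{6}$, upgrade the $L^{2}$ hypothesis (ii) to this pointwise inequality by a conformal change whose existence rests on exactly the Cauchy--Schwarz estimate
$\int 6\max\{\lambda_{\max}(W_{+}),\lambda_{\max}(W_{-})\}\psi^{2}\,dv\le 6\bigl(\int[\max\{\lambda_{\max}(W_{+}),\lambda_{\max}(W_{-})\}]^{2}\,dv\bigr)^{1/2}\bigl(\int\psi^{4}\,dv\bigr)^{1/2}$ together with the conformal invariance of $\int[\max\{\lambda_{\max}(W_{\pm})\}]^{2}\,dv$ (this is where the constant $\tfrac{1}{36}$ enters in both arguments), and then invoke the Chen--Tang--Zhu classification of compact four-manifolds with positive isotropic curvature. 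The one genuine difference is the mechanism producing the conformal factor: the paper introduces the generalized Yamabe invariant $\mathcal{GY}$, solves the associated critical-exponent minimization problem (its Lemma 2.1, via subcritical approximation and a blow-up argument) to obtain a minimizer with \emph{constant} modified scalar curvature, and then checks that this constant is positive; you instead take the first eigenfunction of the linear operator $\tilde L=-6\Delta_{g}+(R_{g}-6\max\{\lambda_{\max}(W_{+}),\lambda_{\max}(W_{-})\})$ and show $\mu_{1}(\tilde L)>0$ directly from (i) and (ii). Your route is more elementary --- it bypasses the nonlinear Yamabe-type machinery entirely and is essentially the standard proof of the paper's Corollary 2.7 (Gursky--LeBrun) --- at the cost of not yielding the constant-$\sigma$ representative that the paper reuses in the rigidity Theorem 1.6; you also correctly flag the only technical subtlety, namely that the potential is merely Lipschitz, which the paper handles by enlarging the conformal class to $C^{2,\alpha}$ factors just as you do.
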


Clearly, the condition (ii) in Theorem \ref{t1.1'} (and (i)) is of conformally invariant. We now show that Theorem \ref{CGY} can be deduced from Theorem \ref{t1.1'}.

  Indeed by Gauss-Bonnet-Chern theorem, the condition (ii) in Theorem \ref{CGY} is equivalent to
  \begin{equation}\label{eqc}
\begin{split}
\int_{M^4}|W_{+}|^2+ |W_{-}|^2 dvol_g<\int_{M^4}\frac{R^2}{24}-\frac{|\overset{\circ}{Ric}|^2}{2}dvol_g.
\end{split}
\end{equation}
  The fact that $$\int_{M^4}|W_{+}|^2+ |W_{-}|^2 dvol_g$$ is conformally invariant implies  $$\int_{M^4}\frac{R^2}{24}-\frac{|\overset{\circ}{Ric}|^2}{2}dvol_g$$ is also conformally invariant by (\ref{GB}).
  By the solution of Yamabe problem due to Schoen \cite{Sch}, there is a Yamabe metric $\tilde{g}$ in the conformal class $\mathcal{C}_g$ of $g$ such that
\be \label{yyb'} \mathcal{Y}(M^4,\mathcal{C}_g)=\inf\limits_{g'\in
\mathcal{C}_g}\frac{\int_{M^4}R_{g'}dv_{g'}}{(\int_{M^4}dv_{g'})^{\frac{1}{2}}}=
\frac{\int_{M^4}R_{\tilde{g}}dv_{\tilde{g}}}{(\int_{M^4}dv_{\tilde{g}})^{\frac{1}{2}}}.
\ee
Moreover, the Yamabe metric $\tilde{g}$ has constant scalar curvature. By the conformal invariance of right hand side of (\ref{eqc}), we have
\begin{equation}\label{eqcc}
\begin{split}
\int_{M^4}|W_{+}|^2+ |W_{-}|^2 dvol_g<& \int_{M^4}\frac{{R_{\tilde{g}}}^2}{24}-\frac{|\overset{\circ}{Ric_{\tilde{g}}}|_{\tilde{g}}^2}{2}dvol_{\tilde{g}}\\
\leq & \int_{M^4}\frac{{R_{\tilde{g}}}^2}{24}=\frac{\mathcal{Y}(M^4,\mathcal{C}_{g})^2}{24}.
\end{split}
\end{equation}
On the other hand, let $\lambda_1\geq\lambda_2\geq \lambda_3$ be the eigenvalues of $W_{+}$. Since $W_{+}$ is of trace free, we have $\lambda_1+\lambda_2+\lambda_3=0$, and
\begin{equation}\label{esr}
\begin{split}
|W_{+}|^2=\lambda_1^2+\lambda_2^2+\lambda_3^2\geq \lambda_1^2+\frac{1}{2}(\lambda_2+ \lambda_3)^2=\frac{3}{2}|\lambda_{\max}(W_{+})|^2.
\end{split}
\end{equation}
Similarly, we have $$ |W_{-}|^2\geq \frac{3}{2}|\lambda_{\max}(W_{-})|^2.$$ Combining with (\ref{eqcc}), we have
\begin{equation}\label{eqccc}
\begin{split}
\int_{M^4}|\lambda_{\max}(W_{+})|^2+ |\lambda_{\max}(W_{-})|^2 dvol_g<& \frac{\mathcal{Y}(M^4,\mathcal{C}_{g})^2}{36}.
\end{split}
\end{equation}
which clearly implies the condition (ii) in Theorem \ref{t1.1'}. Thus it follows from the conclusion of Theorem \ref{t1.1'} that $b_2(M^4)=0$  and $M^4$ has a finite cover $\tilde{M}^4$ diffeomorphic to $\#^{k} S^3\times S^1,$ where $k=b_1(\tilde{M}^4).$
 Obviously, the condition (ii) in Theorem \ref{CGY} implies that $\chi(M^4)>0$. Since $\chi(\tilde{M}^4)=2-2b_1=2-2k>0$, we then have $k=0$. By Theorem \ref{t1.1'} again, we deduce that $M^4$ is diffeomorphic to $S^4$ or $RP^4.$ Hence we have proved that Theorem \ref{CGY} can be deduced from Theorem \ref{t1.1'}.

  Furthermore, it follows from Theorem \ref{t1.1'} that if $\chi(M^4)=0,$ then $M^4$ has a finite cover diffeomorphic to $ S^3\times S^1.$ In the other most cases, we have $\chi({M}^4)<0$ since $\chi(\tilde{M}^4)=2-2b_1=2-2k<0$, for $k>1.$ Thus we have the following remark.

 \begin{remark}

   Theorem \ref{t1.1'} generalizes Theorem \ref{CGY} in two aspects: the conditions in Theorem \ref{CGY} imply the conditions in Theorem \ref{t1.1'}; Theorem \ref{t1.1'} admits  many 4-manifolds with negative Euler characteristic.

 \end{remark}

By combining a result of Micallef-Wang (Theorem 4.10 in \cite{MW}), we also have a rigidity theorem:

\begin{theorem}\label{t1.1''}
Let $(M^4,g)$ be a compact four-dimensional Riemannian manifold.  Suppose we have
\begin{equation}\label{cip}
\begin{split}
& \ (i)\ \ \ \ \mathcal{Y}(M^4,\mathcal{C}_g)>0,\\
& \ (ii)\int_{M^4}[\max\{\lambda_{\max}(W_{+}),\lambda_{\max}(W_{-})\}]^2 dvol_g=\frac{1}{36}\mathcal{Y}(M^4,\mathcal{C}_g)^2.
\end{split}
\end{equation}
If $M^4$ is not diffeomorphic to $$S^4\# m RP^4\#
(R\times S^3)/\Gamma_1\#\cdots (R\times S^3)/\Gamma_k,$$ for  all $m=0, 1$ and nonnegative integer $k,$  then one of the following occurs \\
(a) $(M^4,g)$ is conformal to $CP^2$ with the Fubini-Study metric;\\
  (b) the universal cover of  $(M^4,g)$ is conformal  to $(\Sigma_1,g_1)\times (\Sigma_2,g_2),$ where the surface $(\Sigma_i,g_i)$ has constant Gaussian curvature $k_i,$ and $k_1+k_2>0.$
\end{theorem}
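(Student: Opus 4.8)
The plan is to use the conformal invariance of hypothesis (ii) to manufacture a distinguished metric in the class $\mathcal{C}_g$ on which the equality becomes a \emph{pointwise} rigidity statement, and then to feed that metric into the Micallef--Wang structure theory. The key preliminary observation is that in dimension four the pinching quantity measures exactly the failure of positive isotropic curvature. Writing $a_1\le a_2\le a_3$ and $c_1\le c_2\le c_3$ for the eigenvalues of $A=\frac{R}{12}+W_{+}$ and $C=\frac{R}{12}+W_{-}$, the trace-freeness of $W_{\pm}$ gives $a_1+a_2=\frac{R}{6}-\lambda_{\max}(W_{+})$ and $c_1+c_2=\frac{R}{6}-\lambda_{\max}(W_{-})$; hence, setting $\Lambda_g=\max\{\lambda_{\max}(W_{+}),\lambda_{\max}(W_{-})\}$, the metric has nonnegative (resp.\ positive) isotropic curvature precisely when $R\ge 6\Lambda_g$ (resp.\ $R>6\Lambda_g$) pointwise. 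Under $\tilde g=\phi^{2}g$ with $\phi>0$ one has $\Lambda_{\tilde g}=\phi^{-2}\Lambda_g$ and $R_{\tilde g}=\phi^{-3}(-6\Delta_g\phi+R_g\phi)$, so the existence of a metric in $\mathcal{C}_g$ with nonnegative isotropic curvature is equivalent to solvability of $(-6\Delta_g+R_g-6\Lambda_g)\phi\ge 0$ with $\phi>0$, i.e.\ to $\lambda_1\big(-6\Delta_g+R_g-6\Lambda_g\big)\ge0$.

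I would then estimate this first eigenvalue from (i) and (ii). For every $\phi>0$ the Yamabe quotient gives $\int_M(6|\nabla\phi|^2+R_g\phi^2)\,dvol_g\ge \mathcal{Y}(M^4,\mathcal{C}_g)\,\|\phi\|_{L^4}^2$, while H\"older's inequality gives $\int_M\Lambda_g\phi^2\,dvol_g\le\|\Lambda_g\|_{L^2}\|\phi\|_{L^4}^2$, so that
\begin{equation}
\int_M \big(6|\nabla\phi|^2+(R_g-6\Lambda_g)\phi^2\big)\,dvol_g \ \ge\ \big(\mathcal{Y}(M^4,\mathcal{C}_g)-6\|\Lambda_g\|_{L^2}\big)\|\phi\|_{L^4}^2 .
\end{equation}
The equality hypothesis (ii) reads $\|\Lambda_g\|_{L^2}=\tfrac16\mathcal{Y}(M^4,\mathcal{C}_g)$, so the right-hand side is nonnegative and $\lambda_1(-6\Delta_g+R_g-6\Lambda_g)\ge0$. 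Now split into two cases. If this eigenvalue is strictly positive, its positive ground state $\phi$ yields $R_{\tilde g}-6\Lambda_{\tilde g}=\lambda_1\phi^{-2}>0$, so $\tilde g=\phi^2g$ has strictly positive isotropic curvature; by the classification of compact four-manifolds with positive isotropic curvature underlying Theorem \ref{t1.1'}, $M^4$ would be diffeomorphic to one of the excluded connected sums, contradicting our assumption. Hence the first eigenvalue vanishes.

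Consequently there is a positive ground state $\phi$ with $(-6\Delta_g+R_g-6\Lambda_g)\phi=0$, and the metric $\tilde g=\phi^2g\in\mathcal{C}_g$ satisfies $R_{\tilde g}=6\Lambda_{\tilde g}$ at every point. This says exactly that $\min\{a_1+a_2,\,c_1+c_2\}\equiv 0$ for $\tilde g$: the manifold $(M^4,\tilde g)$ has nonnegative isotropic curvature yet carries a zero isotropic two-plane at every point, so it is \emph{nowhere} of positive isotropic curvature. At this stage I would invoke Theorem 4.10 of \cite{MW}, which in this degenerate case describes the universal cover of $(M^4,\tilde g)$ as being, up to scaling, conformally flat, $CP^2$ with the Fubini--Study metric, a reducible product $N^3\times\R$, or a product $\Sigma_1\times\Sigma_2$ of surfaces.

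Finally I would sort these alternatives. A conformally flat universal cover forces $W_{\pm}\equiv0$, hence $\Lambda_{\tilde g}\equiv0$ and $R_{\tilde g}\equiv0$, contradicting $\mathcal{Y}(M^4,\mathcal{C}_g)>0$; a factor $N^3\times\R$ produces precisely the summands $(\R\times S^3)/\Gamma_i$ and so places $M^4$ in the family excluded by hypothesis. This leaves $CP^2$ with the Fubini--Study metric, giving conclusion (a), or a product $\Sigma_1\times\Sigma_2$, giving conclusion (b); since $\tilde g$ is conformal to $g$, these are statements about the conformal class of $g$. In the latter case the product has constant scalar curvature $2(k_1+k_2)$, and positivity of the Yamabe invariant forces $k_1+k_2>0$, since if $k_1+k_2=0$ the conformal Laplacian of the product would have vanishing first eigenvalue, contradicting (i). The principal difficulty is the final two steps: verifying that the spectral equality $\lambda_1=0$ translates into exactly the pointwise degeneracy of isotropic curvature required to apply Theorem 4.10 of \cite{MW}, and then matching the Micallef--Wang alternatives against the excluded connected-sum family so that only the K\"ahler models (a) and (b) survive.
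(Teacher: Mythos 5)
Your overall architecture matches the paper's: conformally deform to a metric $\tilde g\in\mathcal{C}_g$ with $R_{\tilde g}=6\Lambda_{\tilde g}$ pointwise (hence non-negative but nowhere positive isotropic curvature, the positive case being excluded by the topological hypothesis together with the classification behind Theorem \ref{t1.1}), then feed $\tilde g$ into Micallef--Wang and sort the alternatives by the de Rham/K\"ahler dichotomy. Where you differ is in how the degenerate metric is produced: the paper shows $\mathcal{GY}(M^4,\mathcal{C}_g)=0$ and takes the minimizer of the generalized Yamabe functional from Lemma \ref{l1.1}, whereas you take the ground state of the linear operator $-6\Delta_g+R_g-6\Lambda_g$ and show its first eigenvalue is $\ge 0$ via the Yamabe quotient and H\"older. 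That is a legitimate and genuinely lighter route --- it avoids the subcritical approximation and blow-up analysis needed to solve the generalized Yamabe problem --- and the trichotomy $\lambda_1>0/=0/<0$ is equivalent to the sign of $\mathcal{GY}(M^4,\mathcal{C}_g)$, so the two constructions produce conformal metrics with the same pointwise identity.

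There is, however, a genuine gap at the end. Conclusions (a) and (b) are \emph{metric} rigidity statements: (a) requires $\tilde g$ to be exactly the Fubini--Study metric (not merely that the universal cover is biholomorphic to $CP^2$), and (b) requires the surface factors to have \emph{constant} Gaussian curvature. The paper gets both from the fact that its optimal metric satisfies $R_{\tilde g}=6\Lambda_{\tilde g}\equiv\mathrm{const}>0$: constancy of $R_{\tilde g}$ plus $b_2=1$ forces the K\"ahler metric on $CP^2$ to be K\"ahler--Einstein, and on a product $\Sigma_1\times\Sigma_2$ it forces each $k_i$ to be constant. Your ground state only gives the pointwise identity $R_{\tilde g}=6\Lambda_{\tilde g}$, not constancy, and your final paragraph tacitly assumes it when you write that the product ``has constant scalar curvature $2(k_1+k_2)$.'' The fix is available inside your own computation: since $\lambda_1=0$, the chain
\begin{equation*}
0=\int_M\bigl(6|\nabla\phi|^2+(R_g-6\Lambda_g)\phi^2\bigr)\,dvol_g\ \ge\ \bigl(\mathcal{Y}(M^4,\mathcal{C}_g)-6\|\Lambda_g\|_{L^2}\bigr)\|\phi\|_{L^4}^2=0
\end{equation*}
forces equality in H\"older, i.e.\ $\Lambda_g=c\,\phi^2$ and hence $\Lambda_{\tilde g}\equiv c$, and simultaneously forces $\phi$ to realize the Yamabe infimum, so $R_{\tilde g}$ is constant (and positive, since $\int R_{\tilde g}\,dv_{\tilde g}=\mathcal{Y}(M^4,\mathcal{C}_g)>0$). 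You should make this equality analysis explicit before invoking the Micallef--Wang structure theorems; with it, your treatment of the conformally flat and $N^3\times\mathbb{R}$ alternatives and the reduction to cases (a) and (b) goes through as you describe.
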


  To put our result into a better formulation, we shall consider a generalized Yamabe invariant $\mathcal{GY}$ on $4-$manifolds, and give a complete classification of all 4-manifolds with positive $\mathcal{GY}.$ More precisely, analogous to the Yamabe invariant on a
 differentiable manifold $M^n$, a conformal invariant $\mathcal{GY}(M^n,\mathcal{C})$ and a differentiable invariant
$\mathcal{GY}(M^n)$ can be defined by: 
\be \begin{split}\label{cfi}
&\mathcal{GY}(M^n,\mathcal{C})=\inf_{g\in \mathcal{C}}\frac{\int_{M^n}(R_g-6\max\{\lambda_{\max}(W_{+}),\lambda_{\max}(W_{-})\}) dv_{{g}}}{{(\int_{M^n}dv_{{g}})}^{\frac{1}{2}}}, \\
  & \mathcal{GY}(M^n)=\sup_{ \mathcal{C}}\mathcal{GY}(M^n, \mathcal{C}).
   \end{split}
\ee

\vskip 0.5cm

\begin{theorem}\label{t1.1}
Let $M^4$ be a compact four-dimensional manifold with $\mathcal{GY}(M^4)>0.$
Then $M^4$ is diffeomorphic to a connected sum  $$S^4\# m RP^4\#
(R\times S^3)/\Gamma_1\#\cdots (R\times S^3)/\Gamma_k,$$
for  $m=0,$ or $1$ and some nonnegative integer $k,$ where each $\Gamma_i$ is a cocompact discrete subgroup of the isometric group
of $R\times S^3.$
\end{theorem}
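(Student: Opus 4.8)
The plan is to prove Theorem \ref{t1.1} directly, converting the positivity of $\mathcal{GY}(M^4)$ into the existence of a Riemannian metric on $M^4$ with \emph{pointwise} positive isotropic curvature, and then invoking the classification of compact four-manifolds carrying such a metric. The bridge is a purely algebraic observation in dimension four. Writing $W_{\max}=\max\{\lambda_{\max}(W_+),\lambda_{\max}(W_-)\}$ and recalling the block decomposition with $A=\frac{R}{12}+W_+$ and $C=\frac{R}{12}+W_-$, the two smallest eigenvalues of $A$ sum to $\frac{R}{6}-\lambda_{\max}(W_+)$, and similarly for $C$. By the characterization of isotropic curvature in terms of these blocks (Micallef--Wang \cite{MW}), an oriented four-manifold has positive isotropic curvature exactly when both sums are positive, i.e. precisely when
\[
R-6W_{\max}>0\quad\text{pointwise.}
\]
Thus the quantity $R-6\max\{\lambda_{\max}(W_+),\lambda_{\max}(W_-)\}$ appearing in the definition \eqref{cfi} of $\mathcal{GY}$ is exactly the ``generalized scalar curvature'' whose positivity is equivalent to positive isotropic curvature, and the coefficient $6$ is forced by this equivalence.

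First I would fix a conformal class $\mathcal{C}$ with $\mathcal{GY}(M^4,\mathcal{C})>0$, which exists since $\mathcal{GY}(M^4)=\sup_{\mathcal C}\mathcal{GY}(M^4,\mathcal{C})>0$. The next step is a conformal normalization. Since $\lambda_{\max}(W_\pm)$ is an operator eigenvalue of the algebraic Weyl tensor, it transforms tensorially with no derivative correction: for $g_u=u^2g$ one has $W_{\max}(g_u)=u^{-2}W_{\max}(g)$, while $R_{g_u}=u^{-3}(-6\Delta_g u+R_g u)$, so that
\[
R_{g_u}-6W_{\max}(g_u)=u^{-3}\big[-6\Delta_g u+(R_g-6W_{\max})u\big].
\]
After substituting $g_u=u^2g$ and $dv_{g_u}=u^4dv_g$, the functional in \eqref{cfi} becomes the Rayleigh quotient of the Schr\"odinger-type operator $\widetilde L_g=-6\Delta_g+(R_g-6W_{\max})$,
\[
\mathcal{GY}(M^4,\mathcal{C})=\inf_{u>0}\frac{\int_{M^4}\big(6|\nabla u|^2+(R_g-6W_{\max})u^2\big)dv_g}{\big(\int_{M^4}u^4dv_g\big)^{1/2}}.
\]
Testing this quotient against the first eigenfunction of $\widetilde L_g$ shows that $\mathcal{GY}(M^4,\mathcal{C})>0$ forces the first eigenvalue $\mu_1$ of $\widetilde L_g$ to be positive; the corresponding principal eigenfunction $u_1$ may be chosen strictly positive. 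Then $g^\ast=u_1^2g$ satisfies $R_{g^\ast}-6W_{\max}(g^\ast)=u_1^{-3}\widetilde L_g u_1=\mu_1 u_1^{-2}>0$ pointwise, so by the equivalence above $(M^4,g^\ast)$ has positive isotropic curvature.

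Two technical points must be handled before concluding. First, the potential $R_g-6W_{\max}$ is only Lipschitz, since the largest-eigenvalue function may fail to be smooth at eigenvalue crossings; nevertheless the principal eigenvalue and a positive principal eigenfunction of $\widetilde L_g$ exist by the variational characterization together with the strong maximum principle, and elliptic regularity gives $u_1\in C^{2,\alpha}$. Because $\mu_1 u_1^{-2}$ is bounded below by a positive constant, the strict positive-isotropic-curvature condition holds with a uniform margin, so a small smoothing of $g^\ast$ preserves it and supplies a genuinely smooth metric with positive isotropic curvature. Second, for non-orientable $M^4$ the tensors $W_\pm$ are only locally defined, but $W_{\max}$ is symmetric under interchange of $W_+$ and $W_-$ and is therefore globally well defined, and the pointwise condition $R-6W_{\max}>0$ is invariant under passing to the oriented double cover; hence the equivalence with positive isotropic curvature persists in the non-orientable case.

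Finally I would invoke the classification of compact four-manifolds admitting a metric of positive isotropic curvature, obtained by Hamilton's Ricci flow with surgery in the form established by the authors, which asserts that such a manifold is diffeomorphic to a connected sum $S^4\# m\,RP^4\#(\R\times S^3)/\Gamma_1\#\cdots\#(\R\times S^3)/\Gamma_k$ with $m\in\{0,1\}$ and each $\Gamma_i$ a cocompact discrete subgroup of the isometry group of $\R\times S^3$; this is exactly the desired conclusion for $M^4$. The hard part is entirely contained in this last step: the conformal reduction above is elementary once the algebraic identity is isolated, whereas the classification of positive-isotropic-curvature four-manifolds is a deep theorem requiring the full machinery of Ricci flow with surgery, and it is this result that carries the weight of the argument. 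I note in passing that the same reduction recovers Theorem \ref{t1.1'} as a corollary: using a constant-scalar-curvature (Yamabe) representative and the Cauchy--Schwarz bound $\int W_{\max}u^2\le(\int W_{\max}^2)^{1/2}(\int u^4)^{1/2}$ gives $\mathcal{GY}(M^4,\mathcal{C})\ge \mathcal{Y}(M^4,\mathcal{C})-6(\int W_{\max}^2)^{1/2}>0$ under hypothesis (ii), so Theorem \ref{t1.1} applies.
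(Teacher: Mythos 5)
Your proposal is correct and follows the same overall strategy as the paper: reduce $\mathcal{GY}(M^4,\mathcal{C})>0$ to the existence of a conformal metric with $R-6\max\{\lambda_{\max}(W_+),\lambda_{\max}(W_-)\}>0$ pointwise, identify this condition with positive isotropic curvature via the trace-free property of $W_\pm$ (the sum of the two smallest eigenvalues of $\frac{R}{12}+W_\pm$ being $\frac{R}{6}-\lambda_{\max}(W_\pm)$), and then invoke the classification of compact PIC four-manifolds from \cite{CTZ}, which indeed carries all the weight. The one genuine difference is in how you produce the conformal factor: the paper routes this through Corollary \ref{2.7}, which it derives from the full solution of the (nonlinear) generalized Yamabe problem in Lemma \ref{l1.1} via subcritical approximation, whereas you use only the principal eigenfunction of the linear Schr\"odinger operator $-6\Delta_g+(R_g-6W_{\max})$, observing that $\mathcal{GY}(M^4,\mathcal{C})>0$ forces $\mu_1\geq \mathcal{GY}(M^4,\mathcal{C})\,\mathrm{Vol}^{-1/2}>0$ by H\"older's inequality. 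This is more elementary and entirely sufficient here, since only the sign of the generalized scalar curvature is needed, not a constant-curvature representative; it is essentially the argument behind Proposition 3 of \cite{GLe}, which the paper cites as the origin of Corollary \ref{2.7}. You also treat two points more explicitly than the paper does: the limited ($C^{2,\alpha}$) regularity of the conformal factor caused by the merely Lipschitz potential, resolved by a small smoothing that preserves the open, uniformly bounded-below PIC condition, and the non-orientable case, where $W_{\max}$ remains globally defined. Both refinements are correct and welcome.
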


With Theorem \ref{t1.1} in hand, Theorem \ref{t1.1'} can actually be proved by verifying $\mathcal{GY}>0.$ Note that the Weyl tensor vanishes on three-dimensional manifolds. Clearly, Theorem \ref{t1.1}  is a four-dimensional analogue of Schoen-Yau's classification Theorem \ref{tschyp}.

The proof of Theorems \ref{t1.1'}, \ref{t1.1''} and  \ref{t1.1}  will be given in the section 3.  Let us first recall the strategy of the proof of Chang-Gursky-Yang's conformally invariant sphere theorem in \cite{CGY}.  In the paper \cite{Mar}, Margerin showed that the Ricci flow will deform a metric (on compact four-manifolds) satisfying the condition
   \begin{equation}\label{ppcn}
\begin{split}
|W_{+}|^2+ |W_{-}|^2 <\frac{R^2}{24}-\frac{|\overset{\circ}{Ric}|^2}{2}
\end{split}
\end{equation}
   to a constant curvature one.  The proof of Chang-Gursky-Yang's Theorem \ref{CGY} is to find a conformal factor to transform  the integral  condition (\ref{eqc}) into the above pointwise curvature pinching condition (\ref{ppcn}). This amounts to solving a fully nonlinear equation of Monge-Ampere type. It relies on their previous works on \cite{CGY1} and \cite{CGY2}. The proof of our Theorem \ref{t1.1} also consists of two steps. The first step is to transform the integral condition $\mathcal{GY}>0$ to a pointwise curvature condition by solving a semi-linear elliptic equation. This step is much easier than that of Theorem \ref{CGY}. Fortunately, the pointwise curvature condition in our proof is exactly the positive isotropic curvature (PIC) condition which we have studied in \cite{CZ} and \cite{CTZ}.  The second step depends on our (joint with S.H.Tang ) classification theorem \cite{CTZ} on four-manifolds with PIC. This classification for special four-manifolds without essential incompressible space forms was initiated by Hamilton in \cite{H} and completed in \cite{CZ}.

 \begin{remark} In \cite{Gur}, the corresponding functional and conformal invariant with the modified scalar curvature $R_g-6\max\{\lambda_{\max}(W_{+}),\lambda_{\max}(W_{-})\}$ replaced by $R_g-2\sqrt{6}|W_{\pm}|$ in (\ref{cfi}) were introduced by M. Gursky earlier.  Moreover, many variants of scalar curvature based on the concept of conformal weight were also introduced in \cite{GLe}. Associated to every modified scalar curvature, one can define a modified Yamabe invariant similarly. A more important question is to find out the geometrical and topological significance of these modified Yamabe invariants.
\end{remark}

 In the end of this section, we would like to mention an interesting result in symplectic geometry that has the same spirit as ours. It was shown in \cite{Liu} and \cite{OO} that if a compact 4-manifold with positive Yamabe invariant admits a symplectic structure, then the manifold is diffeomorphic to some blow ups of a complex rational surface or a complex ruled surface.

{\bf Acknowledgements} The authors are grateful to S.-H. Tang for many helpful discussions. The authors thank professor M. Gursky brings the literatures \cite{Gur} \cite{GLe} into our attention.  The first author is partially supported by NSFC11025107, the second author by NSFC10831008.

\section{Generalized Yamabe Invariant}

\label{IN-0}

Before introducing the generalized Yamabe invariant on any 4-manifold, we start with some preliminaries on the original Yamabe invariant.

Yamabe invariant arises from a variational problem in
 seeking Einstein metrics on a given manifold. We describe it as follows.  Fix a
 differentiable manifold $M^n$ of real dimension $n$ without boundary. Given a metric $g$ on $M^n,$ we consider
 the quantity
  $$\mathcal{F}(M^n,g)=
 \frac{\int_{M^n}R_{g}dv_{g}}{(\int_{M^n}dv_{g})^{\frac{n-2}{n}}},
 $$
 where $R_g$ is the scalar curvature of $g,$ $dv_{g}$ is the volume
 measure of $g.$ This gives a functional defined on the  space
 of all Riemannian metrics of $M^n.$ When $n\geq 3,$ Einstein metrics correspond to the critical points
 of $\mathcal{F}$. Unfortunately the functional $\mathcal{F}$ is neither upper nor lower bounded, which causes serious difficulty to get a critical point for the functional. It was observed by Yamabe \cite{Y} that the functional $\mathcal{F}$
 has a lower bound when restricted to the class $\mathcal{C}_g$ of metrics conformal
 to a fixed metric $g$. Moreover, the infimum
\be \label{yyb} \mathcal{Y}(M^n,\mathcal{C}_g)=\inf\limits_{g'\in
\mathcal{C}_g}\frac{\int_{M^n}R_{g'}dv_{g'}}{(\int_{M^n}dv_{g'})^{\frac{n-2}{n}}}
\ee
 has an upper bound $\mathcal{Y}(M^n,\mathcal{C}_g)\leq \mathcal{F}(S^n,g_{round})$ by
 a result of Aubin \cite{A}.

The Yamabe invariant is defined as follows
$$
\mathcal{Y}(M^n)=\sup_{\mathcal{C}}\mathcal{Y}(M^n,\mathcal{C})
$$
where the sup is taken over all conformal classes $\mathcal{C}$ of
Riemannian metrics. Clearly $\mathcal{Y}(M^n)\leq \mathcal{Y}(S^n).$ The point is that if $\mathcal{Y}(M^n)$
is achieved by some $\tilde{g}\in \mathcal{C}$ in some conformal class $\mathcal{C}$,  this metric $\tilde{g}$ is
necessarily an Einstein metric.

By the solution of Yamabe problem (see \cite{A} \cite{Sch}), $\mathcal{Y}(M^n,\mathcal{C})$
can always be
 achieved by some metric of constant scalar curvature (so called Yamabe metric) in $\mathcal{C}$.



\vskip 0.5cm
 Instead of the Yamabe invariant, one can introduce some generalized  and stronger Yamabe invariants in a natural manner. The original Yamabe invariant provides a framework on constructing these invariants.  All these constructions are based on some variants of scalar curvature function. In \cite{GLe}, the modified scalar curvature of the form $R_g-f$ was introduced,  where $f$ is some function of conformal weight $-2$.  In this paper, we are only interested in those particular $f$ which are functionals of Weyl tensors. The construction is the following.  

 Assume the dimension $n>2$.  Let $f$ be a fixed nonnegative, invariant function of homogeneity one defined on the space of self-adjoint linear operators  $W: \Lambda^2(R^n)\rightarrow \Lambda^2(R^n)$, i.e. $f(cW)=cf(W)$, for any $c>0$; $f(UWU^{-1})=f(W)$ for any orthogonal matrix $U$.  The Weyl tensor, regarded as an operator $W: \Lambda^2\rightarrow \Lambda^2$, may be considered as an operator on $\Lambda^2(R^n)$ after choosing a frame. So $f(W)$ is well defined since $f$ is invariant under conjugation of $W$ by an orthogonal matrix.

 Now we consider a generalized "scalar curvature" $R_g-f(W_g)$ and a functional
 \be
 \mathcal{F}_{f}(M^n,g)= \frac{\int_{M^n}R_g-f(W_g) dv_{g}}{(\int_{M^n}dv_g)^{1-\frac{2}{n}}}
  \ee
  over the space of Riemannian metrics.

   By setting $\hat{g}=u^{\frac{4}{n-2}}g,$  it is not hard to see
\be\label{ep}
R_{\hat{g}}-f(W_{\hat{g}})=u^{-\frac{n+2}{n-2}}[-4\frac{n-1}{n-2}\triangle u +(R_g-f(W_g)) u],
\ee
and hence

\be \label{2.8}
   \frac{\int_{M^n}R_{\hat{g}}-f(W_{\hat{g}}) dv_{\hat{g}}}{{(\int_{M^n}dv_{\hat{g}})}^{1-\frac{2}{n}}}= \frac{\int_{M^n}(R_g-f(W_g))u^2+4\frac{n-1}{n-2}|\nabla u|^2dv_g}{(\int_{M^n}u^{\frac{2n}{n-2}}dv_g)^{\frac{n-2}{n}}}.\ee
 By Sobolev imbedding theorem, (\ref{2.8}) implies
  $
 \mathcal{F}_{f}(M^n,\cdot)
 $
  has a lower bound on any fixed conformal class $\mathcal{C}.$  Then we may introduce a conformal and a differentiable invariant as well:
  \be \begin{split}
&\mathcal{Y}_{f}(M^n,\mathcal{C})=\inf_{g\in \mathcal{C}}\mathcal{F}_{f}(M^n,g)\\
  & \mathcal{Y}_{f}(M^n)=\sup_{ \mathcal{C}}\mathcal{Y}_{f}(M^n, \mathcal{C}).
   \end{split}
  \ee 

Similar to the Yamabe problem, the minimizing question for the functional $\mathcal{F}_{f}(M^n,g)$ in a fixed conformal class $ \mathcal{C}$ is called a generalized Yamabe problem.
Since the solution of generalized Yamabe problem may have limited regularity, we have to enlarge our conformal class a little. For fixed smooth Riemannian metric $g,$ denote the new conformal class \begin{equation}\hat{\mathcal{C}}_g=\{u^{\frac{4}{n-2}}g\mid u>0, u\in C^{2,\alpha}, \text{for\  any }0<\alpha<1 \}. \end{equation}
Clearly $\hat{\mathcal{C}}_g\supset {\mathcal{C}}_g$ and $\mathcal{Y}_{f}(M^n,\mathcal{C}_g)=\mathcal{Y}_{f}(M^n,\hat{\mathcal{C}}_g)$.

For convenience, we might drop the subscript $g$ from ${\mathcal{C}}_g$ and $\hat{\mathcal{C}}_g$ when there is no danger of confusion.

Simple properties of these invariants are listed in the following lemmas.
The first lemma asserts that the generalized Yamabe problem
is solvable.
\begin{lemma}\label{l1.1}

For any conformal class $\hat{\mathcal{C}}_g$ of Riemannian metrics on
$M^n$, the infimum $\mathcal{Y}_f(M^n,\hat{\mathcal{C}}_g)$ of the functional
$\mathcal{F}_f(M^n,g)$ over $\hat{\mathcal{C}}_g$ can be achieved by some  $\hat g\in \hat{\mathcal{C}}_g$ which has
constant generalized scalar curvature $R_{\hat g}-f(W_{\hat g}).$
\end{lemma}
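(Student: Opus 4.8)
The plan is to recast the minimization of $\mathcal{F}_f$ over $\hat{\mathcal{C}}_g$ as a constrained variational problem of Yamabe type and then run the classical subcritical approximation, the point being that the sign condition $f\geq 0$ keeps the generalized Yamabe constant strictly below the round-sphere threshold, so the usual loss of compactness cannot occur. Using (\ref{2.8}) I would first rewrite the problem as minimizing
\[
Q(u)=\frac{\int_{M^n} a|\nabla u|^2+c\,u^2\,dv_g}{\left(\int_{M^n}u^{\frac{2n}{n-2}}dv_g\right)^{\frac{n-2}{n}}},\qquad a=4\tfrac{n-1}{n-2},\ \ c=R_g-f(W_g),
\]
over $0<u\in H^1(M^n)$; since $C^\infty$ is dense in $C^{2,\alpha}\cap H^1$, this infimum equals $\mathcal{Y}_f(M^n,\hat{\mathcal{C}}_g)$. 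Because $f\geq 0$ we have $c\leq R_g$, so $Q(u)$ is bounded above by the ordinary Yamabe quotient for every $u>0$, which yields $\mathcal{Y}_f(M^n,\mathcal{C})\leq \mathcal{Y}(M^n,\mathcal{C})\leq \mathcal{Y}(S^n)$ by Aubin's bound.

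Next, for each subcritical exponent $2<p<\frac{2n}{n-2}$ the compact Rellich embedding $H^1\hookrightarrow L^p$ produces, after normalizing $\|u_p\|_{L^p}=1$ and replacing $u_p$ by $|u_p|$, a nonnegative minimizer $u_p$ of the corresponding subcritical quotient, solving $-a\triangle u_p+c\,u_p=\mu_p\,u_p^{p-1}$ with $\mu_p\to\mathcal{Y}_f(M^n,\mathcal{C})$. To pass to the critical exponent I would invoke the standard concentration-compactness dichotomy: the family $\{u_p\}$ either converges in $H^1$ to a genuine minimizer of $Q$ or concentrates at a point, and concentration can occur only when $\mathcal{Y}_f(M^n,\mathcal{C})=\mathcal{Y}(S^n)$. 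This is exactly where $f\geq 0$ pays off: the inequality $\mathcal{Y}_f\leq \mathcal{Y}(S^n)$ is strict unless $\mathcal{Y}(M^n,\mathcal{C})=\mathcal{Y}(S^n)$, which by the rigidity in the solution of the Yamabe problem forces $(M^n,\mathcal{C})$ to be the conformal class of the round sphere; but then the metric is conformally flat, so $W\equiv 0$ and $f(W)\equiv 0$ throughout the class, and the round metric is itself the desired minimizer. Hence in all cases the infimum is attained by some $u\geq 0$, $u\not\equiv 0$.

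It then remains to establish regularity and read off the Euler--Lagrange identity. The minimizer $u$ is a weak $H^1$ solution of the critical equation $-a\triangle u+cu=\mathcal{Y}_f(M^n,\mathcal{C})\,u^{\frac{n+2}{n-2}}$; by Trudinger's regularity for critical semilinear equations $u\in L^\infty$, hence $u\in C^{0,\alpha}$, and the Harnack inequality together with the strong maximum principle gives $u>0$. Since $W_g$ is smooth and $f$ is Lipschitz (for instance $f=\lambda_{\max}$ is $1$-Lipschitz in the operator norm), the coefficient $c=R_g-f(W_g)$ is Lipschitz, so Schauder estimates bootstrap $u$ to $C^{2,\alpha}$ for every $0<\alpha<1$ — and no further, which is precisely the reason the conformal class must be enlarged to $\hat{\mathcal{C}}_g$. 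Finally, substituting the Euler--Lagrange equation into (\ref{ep}) and using $\frac{n+2}{n-2}=\frac{2n}{n-2}-1$ gives
\[
R_{\hat g}-f(W_{\hat g})=u^{-\frac{n+2}{n-2}}\bigl(-a\triangle u+cu\bigr)=\mathcal{Y}_f(M^n,\mathcal{C})=\mathrm{const},
\]
so $\hat g=u^{4/(n-2)}g\in\hat{\mathcal{C}}_g$ has constant generalized scalar curvature, as claimed.

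The main obstacle is the passage to the critical Sobolev exponent, that is, ruling out the loss of compactness through bubbling. In the ordinary Yamabe problem this is the delicate point requiring either Aubin's sharp test-function estimates or the positive mass theorem, but for the generalized problem the single observation $f\geq 0$ combined with Aubin's bound confines $\mathcal{Y}_f$ strictly below $\mathcal{Y}(S^n)$ away from the trivial spherical conformal class, so concentration is automatically excluded and no positive mass theorem is needed; this is the precise sense in which this step is \emph{much easier} than the corresponding step in the proof of Theorem \ref{CGY}.
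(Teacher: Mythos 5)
Your proposal is correct and follows essentially the same route as the paper: subcritical approximation, the bound $\mathcal{Y}_f\leq\mathcal{Y}(M^n,\mathcal{C})\leq\mathcal{Y}(S^n)$ coming from $f\geq 0$, exclusion of concentration when $\mathcal{Y}_f(M^n,\mathcal{C})<\mathcal{Y}(S^n)$, and disposal of the equality case via the rigidity in the solution of the Yamabe problem (where $W\equiv 0$ forces $f(W)=0$). The only cosmetic difference is that the paper rules out bubbling by an explicit rescaling of the subcritical minimizers at their maximum points to produce a solution on $\mathbb{R}^n$ with quotient at least $\mathcal{Y}(S^n)$, rather than quoting the concentration--compactness dichotomy; your remarks on the Lipschitz regularity of $f$ and the resulting $C^{2,\alpha}$ (but not better) regularity, which is why $\hat{\mathcal{C}}_g$ is needed, match the paper's intent.
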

\begin{proof} First of all, we know $\mathcal{Y}_f(M^n,\mathcal{C})\leq \mathcal{Y}(M^n,\mathcal{C})\leq \mathcal{Y}(S^n).$ The last inequality $\mathcal{Y}(M^n,\mathcal{C})\leq \mathcal{Y}(S^n)$ is a result of Aubin \cite{A}. The proof consists of two steps.

Step 1.  We shall show that the problem is solvable provided
$\mathcal{Y}_f(M^n,\mathcal{C})<\mathcal{Y}(S^n).$  We only give a sketch of proof, the argument is similar to that of the Yamabe problem (see the book \cite{SchoenYau} for detailed exposition).

For $2<s<\frac{2n}{n-2},$ we consider the functional
\be \label{2.8'}
   \mathcal{F}_s(u)=\frac{\int_{M^n}(R_g-f(W_g))u^2+4\frac{n-1}{n-2}|\nabla u|^2dv_g}{(\int_{M^n}|u|^{s}dv_g)^{\frac{2}{s}}},\ee
and $\mu_s=\inf\{\mathcal{F}_s(u)\mid u\in W^{1,2}(M^n)\backslash \{0\} \}.$ By Sobolev embedding theorem, there is a constant $C>0,$ such that $-C<\mu_s<C,$ for any $s\in (2,\frac{2n}{n-2}).$

We know the function $\mu_s$ is upper semi-continuous, in particular, we have $\overline{\lim\limits_{s\rightarrow \frac{2n}{n-2}}}\mu_s\leq \mathcal{Y}_f(M^n,\mathcal{C})$. 
Indeed, take a minimizing sequence $u_i$ of $\mathcal{F}_{\frac{2n}{n-2}}$ such that $\lim\limits_{i\rightarrow \infty}\mathcal{F}_{\frac{2n}{n-2}}(u_i)=\mathcal{Y}(M^n,\mathcal{C}),$  for fixed $i$ and $s$ we have
$$
\mu_s\leq \mathcal{F}_{s}(u_i)\leq \mathcal{F}_{\frac{2n}{n-2}}(u_i)\frac{\parallel u_i\parallel_{L^{\frac{2n}{n-2}}}^2}{\parallel u_i\parallel_{L^{s}}^2}
$$
This implies $\overline{\lim\limits_{s\rightarrow \frac{2n}{n-2}}}\mu_s\leq \mathcal{F}_{\frac{2n}{n-2}}(u_i)$ for any $i.$ The result follows from taking the limit $i\rightarrow \infty.$

For $s\in (2,\frac{2n}{n-2}),$ take a minimizing sequence $u_i\in W^{1,2}(M^n)$  such that $$\int_{M^n} |u|_i^sdv_d=1, \ \ \ u_i\geq 0$$ and $$ \lim\limits_{i\rightarrow \infty}\mathcal{F}_s(u_i)=\mu_s.$$ By Sobolev embedding theorem, there is a subsequence converging to some $u_s$ which satisfies
$$
[-4\frac{n-1}{n-2}\triangle u_s +(R_g-f(W_g)) u_s]=\mu_s u_s^{s-1}
$$
in weak sense. Since $s<\frac{2n}{n-2},$ by Sobolev embedding theorem and boot-strap arguments, we know $u_s\in C^{2,\alpha}$ for any $0<\alpha<1.$ Moreover, we know $u_s>0$ by strong maximum principle.

We claim there is a constant $C>0$ such that $u_s<C$ for all $s\in (2,\frac{2n}{n-2}).$ Suppose this has been proved, then $u_s$ will be uniformaly bounded  in $C^{2,\alpha}. $ After taking a convergent subsequence of $u_s,$  the limit $u\in C^{2,\alpha}$ will satisfy  $$
[-4\frac{n-1}{n-2}\triangle u +(R_g-f(W_g)) u]=(\overline{\lim\limits_{s\rightarrow \frac{2n}{n-2}}}\mu_s) u^{\frac{n+2}{n-2}}.
$$
This implies $\overline{\lim\limits_{s\rightarrow \frac{2n}{n-2}}}\mu_s\geq\mathcal{Y}(M^n,\mathcal{C}),$ hence $\overline{\lim\limits_{s\rightarrow \frac{2n}{n-2}}}\mu_s=\mathcal{Y}(M^n,\mathcal{C}),$ and $u^{\frac{4}{n-2}}g$ is the solution of the problem.

To prove the claim, we argue by contradiction. Suppose there is a $s_i\rightarrow \frac{2n}{n-2}$ such that $m_i\triangleq\max u_{s_i}=u_{s_i}(x_i)\rightarrow +\infty,$ and $\{\mu_i\}$ is convergent. Take normal coordinates $y_j$ of $(M^n,g)$ around each $x_i,$ consider the new sequence of functions $v_i=m_i^{-1}u_{s_i}(m_i^{\frac{1-s_i}{2}}y)$ of $y.$ These functions $v_i$ have a subsequence (still denoted by $v_i$) converging to some $v\in C^{2,\alpha}(R^n)$ which is strictly positive and satisfies $v(0)=1$ and
\begin{equation}\label{yern}
-4\frac{n-1}{n-2}\triangle_{R^n} v=(\lim_{s_i\rightarrow \frac{2n}{n-2}}\mu_s) v^{\frac{n+2}{n-2}}.
\end{equation}
Moreover, it is not hard to show $\int_{R^n}|\nabla v|^2dy<\infty,$ and $\int_{R^n}|v|^{\frac{2n}{n-2}}dy<\infty.$ Then by multiplying both sides of the equation (\ref{yern}) by $v$ and a cut-off function, integrating by parts, we get
$$
\frac{\int_{R^n}|\nabla v|^2dy}{\int_{R^n}|v|^{\frac{2n}{n-2}}dy}=\lim\limits_{s_i\rightarrow \frac{2n}{n-2}}\mu_s.
$$
By the conformal invariance of the Yamabe invariant, we know $\lim\limits_{s_i\rightarrow \frac{2n}{n-2}}\mu_{s_i}\geq\mathcal{Y}(S^n).$ This is a contradiction with $\overline{\lim\limits_{s\rightarrow \frac{2n}{n-2}}}\mu_{s}\leq \mathcal{Y}(M^n,\mathcal{C})<\mathcal{Y}(S^n).$\\

Step 2. By the solution of Yamabe problem \cite{Sch}, we already know the equality
$\mathcal{Y}(M^n,\mathcal{C})=\mathcal{Y}(S^n)$   occurs if and only if
$(M^n,\mathcal{C})$ is conformal to the sphere $S^n$. So if $\mathcal{Y}_f(M^n,\mathcal{C})=\mathcal{Y}(S^n),$ the manifold is already conformal to the sphere. The problem is clearly solvable in this case.

Combining the both steps,  we know that the generalized Yamabe problem
is always solvable.
\end{proof}

  Particularly, one has the following result which was earlier obtained in \cite{GLe} (see Proposition 3 in \cite{GLe}):

 \begin{corollary}\label{2.7} If $\mathcal{Y}_f(M^n,\mathcal{C})>0,$ then there exists $\tilde{g}\in \mathcal{C}$ such that

 $$R_{\tilde{g}}-f(W_{\tilde{g}})>0.$$
\end{corollary}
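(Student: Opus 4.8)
The plan is to exploit the minimizer furnished by Lemma \ref{l1.1}, read off the sign of its (constant) generalized scalar curvature, and then pass from that $C^{2,\alpha}$ minimizer to a genuinely smooth representative of $\mathcal{C}$ by a perturbation argument.

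First I would invoke Lemma \ref{l1.1}: since $\mathcal{Y}_f(M^n,\mathcal{C})=\mathcal{Y}_f(M^n,\hat{\mathcal{C}}_g)>0$, there is a metric $\hat g=u^{\frac{4}{n-2}}g\in\hat{\mathcal{C}}_g$, with $u\in C^{2,\alpha}$ and $u>0$, which achieves the infimum and has constant generalized scalar curvature $R_{\hat g}-f(W_{\hat g})\equiv c$. Writing $V=\int_{M^n}dv_{\hat g}>0$, the numerator of $\mathcal{F}_f$ equals $cV$, so
$$\mathcal{Y}_f(M^n,\mathcal{C})=\mathcal{F}_f(M^n,\hat g)=\frac{cV}{V^{1-\frac{2}{n}}}=cV^{\frac{2}{n}}.$$
Since the left-hand side is positive and $V>0$, the constant $c$ must be positive; that is, $R_{\hat g}-f(W_{\hat g})=c>0$. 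This already exhibits a conformal metric with positive generalized scalar curvature, but only of regularity $C^{2,\alpha}$.

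It then remains to replace $\hat g$ by a smooth representative in $\mathcal{C}=\mathcal{C}_g$. Here I would use the openness of the strict inequality together with the continuous dependence of $R-f(W)$ on the conformal factor in the $C^2$ topology. Concretely, choose smooth positive functions $u_k\to u$ in $C^2(M^n)$ and set $\tilde g_k=u_k^{\frac{4}{n-2}}g\in\mathcal{C}$. By (\ref{ep}), and using that $f$ has homogeneity one so that $f(W_{\hat g})=u^{-\frac{4}{n-2}}f(W_g)$ under conformal change, one has
$$R_{\tilde g_k}-f(W_{\tilde g_k})=u_k^{-\frac{n+2}{n-2}}\Big[-4\tfrac{n-1}{n-2}\triangle u_k+(R_g-f(W_g))u_k\Big].$$
Because $f(W_g)$ is a continuous invariant of the smooth tensor $W_g$, and $u_k\to u$ in $C^2$, the right-hand side converges uniformly to $R_{\hat g}-f(W_{\hat g})=c>0$. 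Hence $R_{\tilde g_k}-f(W_{\tilde g_k})>0$ for all large $k$, and any such $\tilde g=\tilde g_k$ is the desired smooth metric in $\mathcal{C}$.

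The only genuinely delicate point is this last step: the minimizer from Lemma \ref{l1.1} is in general not smooth, precisely because the potential $R_g-f(W_g)$ is merely continuous (for instance $f=\lambda_{\max}$ is Lipschitz but fails to be smooth where eigenvalues of the Weyl operator collide). What rescues the argument is that $f(W)$ enters as an \emph{undifferentiated} coefficient of conformal weight $-2$, contributing only the zeroth-order term $f(W_g)u$ in (\ref{ep}); consequently $R-f(W)$ depends continuously on the conformal factor in the $C^2$ topology even though it is not smooth on $M^n$, which is exactly what the perturbation step needs. The sign computation in the first step, by contrast, is immediate once the minimizer is in hand.
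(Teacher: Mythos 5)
Your proof is correct and takes essentially the same route as the paper, which presents the corollary as an immediate consequence of Lemma \ref{l1.1}: the minimizer has constant generalized scalar curvature $c$ with $\mathcal{Y}_f(M^n,\mathcal{C})=cV^{\frac{2}{n}}$, forcing $c>0$. Your additional $C^2$-perturbation step, passing from the $C^{2,\alpha}$ minimizer in $\hat{\mathcal{C}}_g$ to a smooth representative of $\mathcal{C}_g$, is a legitimate refinement of a regularity point the paper leaves implicit, and your justification of it (that $f(W_g)$ enters only as a zeroth-order coefficient of conformal weight $-2$) is sound.
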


To compute the invariants $\mathcal{Y}_f(M^n,\mathcal{C})$ on a manifold $M^n$, we need the following results.
\begin{lemma}
If
 $g\in \mathcal{C}$ achieves the minimum $\mathcal{Y}(M^n,\mathcal{C}),$ and $f(W_g)=const.,$
  then $g$ also achieves the minimum of $\mathcal{Y}_f(M^n,\mathcal{C}).$

\end{lemma}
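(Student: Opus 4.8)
The plan is to use the conformal transformation formula (\ref{2.8}) directly, reducing the statement to a comparison with the ordinary Yamabe functional plus a single application of Hölder's inequality. First I would record the two consequences of the hypotheses. Since $g$ achieves $\mathcal{Y}(M^n,\mathcal{C})$ it is a Yamabe metric, so its scalar curvature $R_0 := R_g$ is a constant and $\mathcal{F}(M^n,g)=\mathcal{Y}(M^n,\mathcal{C})$; and by assumption $f_0 := f(W_g)$ is a constant, which is moreover nonnegative because $f$ is nonnegative. Writing $V=\int_{M^n}dv_g$, a direct substitution gives
\[
\mathcal{F}_f(M^n,g)=(R_0-f_0)V^{2/n}=\mathcal{Y}(M^n,\mathcal{C})-f_0V^{2/n}.
\]

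Next I would take an arbitrary competitor $\hat g=u^{4/(n-2)}g\in\mathcal{C}$ and expand its generalized Yamabe quotient using (\ref{2.8}). Because $R_g-f(W_g)=R_0-f_0$ is \emph{constant}, the numerator on the right-hand side of (\ref{2.8}) splits as the numerator of the ordinary Yamabe quotient minus $f_0\int_{M^n}u^2\,dv_g$; dividing by $(\int_{M^n}u^{2n/(n-2)}dv_g)^{(n-2)/n}=(V')^{(n-2)/n}$, where $V'=\int_{M^n}dv_{\hat g}$, this yields
\[
\mathcal{F}_f(M^n,\hat g)=\mathcal{F}(M^n,\hat g)-f_0\,\frac{\int_{M^n}u^2\,dv_g}{(V')^{(n-2)/n}}.
\]
I would then estimate the two terms separately. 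By the minimizing property of $g$ one has $\mathcal{F}(M^n,\hat g)\geq\mathcal{Y}(M^n,\mathcal{C})$, and by Hölder's inequality with conjugate exponents $n/(n-2)$ and $n/2$,
\[
\int_{M^n}u^2\,dv_g\leq\Big(\int_{M^n}u^{2n/(n-2)}\,dv_g\Big)^{(n-2)/n}V^{2/n}=(V')^{(n-2)/n}\,V^{2/n},
\]
so that the subtracted term is at most $f_0V^{2/n}$.

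Combining the two estimates gives $\mathcal{F}_f(M^n,\hat g)\geq\mathcal{Y}(M^n,\mathcal{C})-f_0V^{2/n}=\mathcal{F}_f(M^n,g)$ for every $\hat g\in\mathcal{C}$, which is precisely the assertion that $g$ achieves $\mathcal{Y}_f(M^n,\mathcal{C})$. I do not expect a serious obstacle: the only inputs are (\ref{2.8}), the definition of $\mathcal{Y}(M^n,\mathcal{C})$, and Hölder's inequality. The one point that genuinely deserves care is the sign bookkeeping — the nonnegativity of $f$, hence $f_0\geq 0$, is exactly what converts the Hölder bound on $\int_{M^n}u^2\,dv_g$ into the correct \emph{one-sided} estimate for the subtracted term; were $f$ allowed to change sign, this comparison with the ordinary Yamabe minimizer would break down.
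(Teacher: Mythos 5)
Your argument is correct and is essentially the paper's own proof: both reduce the claim to the conformal transformation formula (\ref{2.8}), split off the constant $f(W_g)$ term, invoke the Yamabe-minimizing property of $g$ for the remaining part, and control $\int u^2\,dv_g$ by H\"older together with $f\geq 0$. The only cosmetic difference is that the paper normalizes $g$ and the competitor to unit volume, whereas you carry the volume factors explicitly.
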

\begin{proof}
Without loss of generality, we may normalize $g$ so that the volume $\int_{M^n}dv_{g}=1.$ Then $\mathcal{Y}(M^n,\mathcal{C})=R_g.$ For any $\tilde{g}=u^{\frac{4}{n-2}}g\in \mathcal{C}$ with unit volume $\int_{M^n}u^{\frac{2n}{n-2}}dv_g=1,$ we have

\begin{equation}\begin{split}\mathcal{F}_f(M^n,\tilde{g})&=\int_{M^n}4\frac{n-1}{n-2}|\nabla u|^2+(R_g-f(W_g))u^2dv_g\\
& \geq \mathcal{Y}(M^n,\mathcal{C})-f(W_g)\int_{M^n}u^2dv_g\\ & \geq \mathcal{Y}(M^n,\mathcal{C})-f(W_g)(\int_{M^n}u^{\frac{2n}{n-2}}dv_g)^{\frac{n-2}{n}}(\int_{M^n}dv_g)^{\frac{2}{n}}\\ &\geq R_g-f(W_g).
\end{split}
\end{equation}
Since we always have $$\mathcal{Y}_f(M^n,\mathcal{C})\leq R_g-f(W_g),$$ the proof is completed.

\end{proof}
It is known \cite{Leb} that the Riemannian metrics with negative constant scalar curvature or Einstein are Yamabe metrics. This implies

\begin{corollary}\label{coro2.5}If
 $g\in \mathcal{C}$ is of constant negative scalar curvature or an Einstein metric and $f(W_g)=const.,$
 then $g\in \mathcal{C}$
achieves $\mathcal{Y}_f(M^n,\mathcal{C})$

\end{corollary}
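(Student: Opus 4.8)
The plan is to deduce this corollary immediately from the preceding lemma, using the cited characterization of Yamabe metrics to verify its hypotheses. Recall that the preceding lemma asserts: if $g\in\mathcal{C}$ achieves the ordinary Yamabe invariant $\mathcal{Y}(M^n,\mathcal{C})$ and $f(W_g)=const.$, then $g$ also achieves the generalized invariant $\mathcal{Y}_f(M^n,\mathcal{C})$. Since the constancy of $f(W_g)$ is already part of the present hypotheses, it suffices to show that a metric of constant negative scalar curvature, or an Einstein metric, is in fact a Yamabe metric, i.e. realizes the infimum in (\ref{yyb}) over its own conformal class.

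First I would invoke the result of \cite{Leb}, stated in the sentence preceding the corollary, that precisely such metrics are Yamabe metrics. In the negative constant scalar curvature case this rests on the fact that when $\mathcal{Y}(M^n,\mathcal{C})<0$ the constant scalar curvature representative in $\mathcal{C}$ is unique up to scaling and coincides with the minimizer; in the Einstein case it follows from the variational characterization of Einstein metrics together with the sharp upper bound $\mathcal{Y}(M^n,\mathcal{C})\leq\mathcal{Y}(S^n)$ of Aubin already used in Lemma \ref{l1.1}.

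Once $g$ is identified as a Yamabe metric, both hypotheses of the preceding lemma hold: $g$ achieves $\mathcal{Y}(M^n,\mathcal{C})$, and $f(W_g)$ is constant by assumption. Applying that lemma then yields directly that $g$ achieves $\mathcal{Y}_f(M^n,\mathcal{C})$, which is the assertion.

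There is no substantive obstacle here; the entire content is packaged into the external input from \cite{Leb} together with the already-proved lemma. The one point requiring minor care is that a constant scalar curvature metric is a Yamabe \emph{minimizer} only in the non-positive or Einstein regimes---a generic constant scalar curvature metric is merely a critical point of the restricted Yamabe functional and may fail to be the minimizer. This is exactly why the hypothesis restricts to the negative or Einstein cases, and it is precisely this restriction that the cited result of \cite{Leb} is designed to cover, so no additional work is needed.
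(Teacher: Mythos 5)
Your proposal matches the paper's own argument: the corollary is derived by citing the result of \cite{Leb} that constant negative scalar curvature metrics and Einstein metrics are Yamabe metrics, and then applying the preceding lemma with the hypothesis $f(W_g)=const.$ The extra remarks you add about why the \cite{Leb} result holds are not in the paper but do not change the route.
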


 The next lemma  handles the invariants $\mathcal{Y}_f$ under a connected sum operation.
  The result has already been know for the original Yamabe invariant (see \cite{K}). Due to the generality of $f$, the result of the current form can be applied in many circumstances.   We leave the proof to the appendix for completeness.  \begin{lemma} (See \cite{K})  \label{2.6}For $n\geq 3,$
  \begin{equation*}\mathcal{Y}_f(M_1 \# M_2)\geq
\left\{
\begin{split}
 \quad & -(|\mathcal{Y}_f(M_1)|^{\frac{n}{2}}+|\mathcal{Y}_f(M_2)|^{\frac{n}{2}})^{\frac{2}{n}}, \ \ \mbox{ if }  \mathcal{Y}_f(M_1)\leq 0\ \  \mbox{and} \ \  \mathcal{Y}_f(M_2)\leq 0,\\
  & \min\{\mathcal{Y}_f(M_1),\mathcal{Y}_f(M_2)\}, \ \ \ \  \ \ \ \ \ \  \mbox{ otherwise }.
  \end{split}
 \right.
\end{equation*}
\end{lemma}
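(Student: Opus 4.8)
The plan is to prove the inequality directly by the connected-sum gluing method of Kobayashi \cite{K}, carried out for the \emph{generalized scalar curvature} $S_g:=R_g-f(W_g)$. Since $\mathcal{Y}_f(M_1\#M_2)$ is a supremum over conformal classes, it is enough to produce, for every $\eps>0$, a single metric $g$ on $M_1\#M_2$ whose generalized Yamabe constant exceeds the asserted right-hand side by less than $\eps$. I would work throughout with the variational form
\[
\mathcal{Y}_f(N^n,\mathcal{C}_g)=\inf_{0\neq u\in W^{1,2}}\frac{\int_{N^n}\big(S_g\,u^2+c_n|\nabla u|^2\big)dv_g}{\big(\int_{N^n}|u|^{\frac{2n}{n-2}}dv_g\big)^{\frac{n-2}{n}}},\qquad c_n=4\tfrac{n-1}{n-2},
\]
which is conformally invariant by (\ref{ep})--(\ref{2.8}), together with the constant-$S_g$ minimizers furnished by Lemma \ref{l1.1}.

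First I would fix near-optimal unit-volume representatives $g_i$ on $M_i$ with $\mathcal{Y}_f(M_i,\mathcal{C}_{g_i})\geq\mathcal{Y}_f(M_i)-\eps$, chosen by Lemma \ref{l1.1} to have constant generalized scalar curvature $S_{g_i}\equiv s_i$ (and, when $\mathcal{Y}_f(M_i)>0$, with $S_{g_i}>0$ pointwise by Corollary \ref{2.7}). I then form $M_1\#M_2$ by the conformal connected-sum construction: a punctured neighborhood of the gluing point $p_i$ is conformally a cylindrical end $(-\infty,0]\times S^{n-1}$, and shrinking the excised ball corresponds to \emph{lengthening} a round neck $[-T,T]\times S^{n-1}$ rather than shrinking it. The decisive geometric fact is that the round cylinder is conformally flat, so its Weyl operator, and hence $f(W_g)$, vanishes identically on the neck, where $S_g$ reduces to the constant positive scalar curvature of the cylinder. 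All Weyl curvature is confined to two transition collars, at a \emph{fixed} conformal scale, on which $W_g$ and therefore $f(W_g)$ stay uniformly bounded; there I would arrange, by bulging the scalar curvature in the manner of Gromov-Lawson so that it dominates the bounded quantity $f(W_g)$, that $S_g\geq 0$ on the collars as well.

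With the neck in hand I would split into the two sign regimes. If $\mathcal{Y}_f(M_1),\mathcal{Y}_f(M_2)\leq0$, I would use the reformulation valid in the nonpositive range, namely $|\mathcal{Y}_f(N,\mathcal{C}_g)|^{n/2}=\inf_{g'\in\mathcal{C}_g}\int_N|(S_{g'})_-|^{n/2}dv_{g'}$ (the analogue of the standard scalar-curvature identity, realized by the constant-$S_g$ representative, for which $\int|s|^{n/2}=|s|^{n/2}=|\mathcal{Y}_f|^{n/2}$ at unit volume). Because the modification is supported in the excised balls, the negative part $(S_g)_-$ survives only on the \emph{unmodified bulk} $M_i\setminus B(p_i,\rho)$, so that $\int_{M_1\#M_2}|(S_g)_-|^{n/2}\leq|\mathcal{Y}_f(M_1)|^{n/2}+|\mathcal{Y}_f(M_2)|^{n/2}+O(\rho^n)$, and letting $\eps,\rho\to0$ gives the first branch. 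If instead one invariant is positive, put $m:=\min\{\mathcal{Y}_f(M_1),\mathcal{Y}_f(M_2)\}$; then one arranges $S_g\geq m$ everywhere (taking the cylinder cross-section small when $m>0$), and the long-neck degeneration forces any near-minimizer to concentrate on a single piece, exactly as for the ordinary Yamabe invariant, whence $\mathcal{Y}_f(M_1\#M_2,\mathcal{C}_g)\geq m-\eps$, which is the second branch.

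The main obstacle is the collar analysis, i.e. keeping the generalized scalar curvature $S_g=R_g-f(W_g)$ under control across the gluing. In the classical Yamabe case only $R_g$ enters and the neck is handled by Gromov-Lawson surgery; here $f$ is a general nonnegative, degree-one, orthogonally invariant, but possibly non-convex and non-smooth function of the full Weyl operator, whose eigenvalues blow up on any naively shrunk neck. The construction therefore has to exploit conformal flatness to force $f(W_g)\equiv0$ on the main cylinder and to confine all Weyl curvature to transition collars kept at fixed scale, where $f(W_g)$ is bounded and can be dominated by a bulge of $R_g$; securing $S_g\geq0$ there for \emph{arbitrary} admissible $f$ is the crux, and it is precisely where the homogeneity and orthogonal invariance of $f$ are used. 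By comparison, the $L^{n/2}$ reformulation in the nonpositive range and the small-volume/concentration estimate in the positive range are routine adaptations of the corresponding facts for the ordinary Yamabe invariant.
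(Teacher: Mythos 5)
Your overall skeleton --- build one good conformal class on $M_1\#M_2$ with a long, exactly round, conformally flat neck and lower-bound its generalized Yamabe constant --- is the same as the paper's, and your observation that $f(W)\equiv 0$ on the round neck is precisely why a general $f$ causes no new difficulty there. Your nonpositive and mixed branches are essentially sound, and in fact need only the easy direction of your $L^{n/2}$ reformulation: writing $S_g=R_g-f(W_g)$, discarding the gradient term and applying H\"older gives $\mathcal{Y}_f(M_1\#M_2,\mathcal{C}_g)\geq -\big(\int|(S_g)_-|^{n/2}dv_g\big)^{2/n}$ for any representative, and since $(S_g)_-$ is supported on the two unit-volume bulks (plus a collar of volume $O(\delta^n)$) this yields the first branch and the mixed case directly.

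There are, however, two problems. The genuine gap is the case where both $\mathcal{Y}_f(M_i)>0$: a pointwise bound $S_g\geq m>0$ does \emph{not} imply $\mathcal{Y}_f(M_1\#M_2,\mathcal{C}_g)\geq m$, because test functions concentrating near a point make $\int S_g u^2\big/\|u\|^2_{L^{2n/(n-2)}}$ arbitrarily small, so the zeroth-order term gives no protection against concentration. The step you defer to ``exactly as for the ordinary Yamabe invariant'' --- that a near-minimizer on the long-necked manifold localizes on one summand --- \emph{is} the content of the lemma in this case (for $f=0$ it is literally Kobayashi's theorem), and it must be carried out for the modified functional. The paper does it by hand and uniformly in all sign cases: take a near-minimizer $u_l$ as in (\ref{dadao}), use the mean value theorem to find a slice $S^{n-1}\times\{t_l\}$ of the neck where the $W^{1,2}$-energy of $u_l$ is $O(1/l)$ as in (\ref{et2}), cut there, transplant the two halves to $M_1\sqcup M_2$ with a linear cutoff, and control the loss by a Sobolev trace estimate, obtaining $\mathcal{Y}_f(M_1\sqcup M_2)\leq\mathcal{Y}_f(M_l,\mathcal{C}_l)+C/l+2\varepsilon$; nothing in this uses $f$ beyond boundedness of $S_g$, so it generalizes, but you need to say it. The second, lesser, issue is that your ``crux'' is misplaced: no Gromov--Lawson bulge is needed to dominate $f(W_g)$ on the transition collars, and such a non-conformal modification would itself perturb $W$ and hence $f(W)$ for your general non-smooth $f$. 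The paper instead perturbs the metric to be exactly flat in a $\delta$-ball around each gluing point before opening the neck; by the continuity estimates (\ref{2.9})--(\ref{2.11}) (which use only that $f$ is degree-one homogeneous, hence bounded on bounded curvature) this costs only $C\delta^2$ in the invariant, see (\ref{pur}), after which $W\equiv 0$ and so $f(W)\equiv 0$ on the entire neck and collar region.
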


\vskip 1cm

\section{Proof of Theorems}
\label{IN-0}

  Let us focus on dimension $4.$  Denote by $\lambda_{\max}(W_{+})$ (or $\lambda_{\max}(W_{-})$)
the largest eigenvalue of the Weyl operator $W_{+}$ (or $W_{-}$) acting on the
(anti-)self-dual 2-forms. Let the function $f$ in the previous section be given by $$f(W_g)=6\max\{\lambda_{\max}(W_{+}),\lambda_{\max}(W_{-})\}.$$
The  variant of the scalar curvature is:
\be \label{6}\sigma_g=R_g-6\max\{\lambda_{\max}(W_{+}),\lambda_{\max}(W_{-})\}.\ee
 Clearly, $\sigma_g$ is
not relevant to the orientation. 


Denote the new conformal invariant and differentiable invariant  by $\mathcal{GY}(M^4,\mathcal{C})$ and
$\mathcal{GY}(M^4):$ 
\be \begin{split}
&\mathcal{GY}(M^4,\mathcal{C})=\inf_{g\in \mathcal{C}}\frac{\int_{M^4}(R_g-6\max\{\lambda_{\max}(W_{+}),\lambda_{\max}(W_{-})\}) dv_{{g}}}{{(\int_{M^4}dv_{{g}})}^{\frac{1}{2}}}, \\
  & \mathcal{GY}(M^4)=\sup_{ \mathcal{C}}\mathcal{GY}(M^4, \mathcal{C}).
   \end{split}
\ee

\vskip 0.5cm
 Now we can prove Theorem \ref{t1.1}, which hinges on the authors' (with S.-H. Tang) recent classification theorem in \cite{CTZ}.

\begin{proof} of Theorem \ref{t1.1}.
By the definition of $\mathcal{GY}(M^4),$ there is a conformal class $\mathcal{C}$ such that $\mathcal{GY}(M^4, \mathcal{C})>0.$
Then from Corollary \ref{2.7}, there exists a $g\in \mathcal{C}$ such that $$\sigma_g=R_g-6\max\{\lambda_{\max}(W_{+}),\lambda_{\max}(W_{-})\}>0.$$
Since both $W_+$ and $W_-$ are of trace free,
this implies the sum of least two eigenvalues of $\frac{R_g}{12}+W_{\pm}$ is positive. In other words, $(M^4,g)$ has positive isotropic curvature.
The result then follows from the main theorem of \cite{CTZ}.
\end{proof}

Now we are in a position to prove Theorem \ref{t1.1'}.
\begin{proof} of Theorem \ref{t1.1'}. We shall show that the manifolds $M^4$ in Theorem \ref{t1.1'} have $\mathcal{GY}(M^4)>0.$
By Lemma \ref{l1.1}, there is a metric $\tilde{g}$ of unit volume in the conformal class $\mathcal{C}_g$ of $g$ achieving the value $\mathcal{GY}(M^4, \mathcal{C}_g)$ and
$$R_{\tilde{g}}-6\max\{\lambda_{\max}(W^{\tilde{g}}_{+}),\lambda_{\max}(W^{\tilde{g}}_{-})\}\equiv\mathcal{GY}(M^4, \mathcal{C}_g).$$
Hence, we have
\begin{equation}
\begin{split}
\mathcal{GY}(M^4, \mathcal{C}_g)&= \int_{M^4}R_{\tilde{g}}dv_{\tilde{g}}-6\int_{M^4} \max\{\lambda_{\max}(W^{\tilde{g}}_{+}),\lambda_{\max}(W^{\tilde{g}}_{-})\}dv_{\tilde{g}}\\
& \geq \mathcal{Y}(M^4,\mathcal{C}_g )-6 (\int_{M^4} [\max\{\lambda_{\max}(W^{\tilde{g}}_{+}),\lambda_{\max}(W^{\tilde{g}}_{-})\}]^2dv_{\tilde{g}})^{\frac{1}{2}}\\
& =\mathcal{Y}(M^4,\mathcal{C}_g )-6 (\int_{M^4} [\max\{\lambda_{\max}(W_{+}),\lambda_{\max}(W_{-})\}]^2dv_{{g}})^{\frac{1}{2}}\\
& >0,
\end{split}
\end{equation}
by the assumptions (i) and (ii) in  Theorem \ref{t1.1'}. Now the result follows from Theorem \ref{t1.1}.

\end{proof}

\begin{proof} of Theorem \ref{t1.1''}. By the assumptions and the above argument, we know $\mathcal{GY}(M^4, \mathcal{C}_g)=0,$ and the optimal metric $\tilde{g}\in \mathcal{C}_g$ satisfies $$R_{\tilde{g}}=6\max\{\lambda_{\max}(W^{\tilde{g}}_{+}),\lambda_{\max}(W^{\tilde{g}}_{-})\} \equiv const.>0.$$
This implies $(M^4,\tilde{g})$ has non-negative isotropic curvature.  Then we can appeal the results of Micallef-Wang \cite{MW}. The reason is the following.

Denote $P=\frac{R}{6}I-W$ and $P_{\pm}=\frac{R}{6}I_{\Lambda^2_{\pm}}-W_{\pm}.$ Then the positive (nonnegative) isotropic curvature condition is equivalent to $P>0(\geq 0).$ We run the Ricci flow on $M^4$ with $\tilde{g}$ as initial data, and get a solution $\tilde{g}(t)$. It is known from \cite{H} that non-negative isotropic curvature is preserved. Combining the main theorem in \cite{CTZ} and the assumption, we know that the manifold can not admit metrics of positive isotropic curvature. By a strong maximum principle argument(Theorem 4.6 in \cite{MW}), the kernels $Ker P_{\pm}$ is invariant under parallel translation, and invariant in time. By changing orientation, we may assume $dim Ker P_{+}\geq 1.$ This will imply that  the universal cover $\tilde{M}^4$ is a K$\ddot{a}$hler manifold with positive scalar curvature (see Theorem 4.9 (c) in \cite{MW}).

Applying de Rham decomposition theorem for K$\ddot{a}$hler manifolds, we know that one of the following occurs:\\
(1) If $(M^4,\tilde{g})$ is locally irreducible, $(M^4,\tilde{g})$ is biholomorphic to $CP^2$ and $\tilde{g}_{t}$ is a K$\ddot{a}$hler metric with positive Chern class (see Theorem 4.10 (d) in \cite{MW}). Since the scalar curvature $R_{\tilde{g}}=const.>0,$ and $b_2(M^4)=1$ in this case, we know $\tilde{g}$ is K$\ddot{a}$hler-Einstein, and hence a homothetic of the standard Fubini-Study metric. This is case a) of Theorem \ref{t1.1''} \\
(2) If $(M^4,\tilde{g})$ is locally reducible, the universal cover of $(M^4,\tilde{g})$ is isometric to $(M_1,g_1)\times (M_2,g_2),$ where $(M_i,g_i)$ is a two-dimensional manifold. Since the scalar curvature $R_{\tilde{g}}=const.>0,$ the Gaussian curvature $k_i$ of $g_i$ must be a constant and satisfies $k_1+k_2>0.$ 
This is case b) of Theorem \ref{t1.1''}.

\end{proof}

Finally, we  compute the generalized Yamabe invariant $\mathcal{GY}(M^4)$ for several concrete 4-manifolds. (For Yamabe invariant, the readers may refer
to the survey \cite{Le}.)

\vskip 0.5cm
 i)  $\mathcal{GY}(S^3\times S^1)=\mathcal{Y}(S^4).$

 Indeed,  there is a sequence of conformally flat structures
 $\mathcal{C}_i$  such that $\mathcal{GY}(\mathcal{C}_i)(=\mathcal{Y}(\mathcal{C}_i))\rightarrow
 \mathcal{Y}(S^4).$
\vskip 0.3cm
 ii) $\mathcal{GY}(CP^2)=0.$

 Let $\mathcal{C}_{FS}$ be the conformal class of the Fubini-Study metric, Corollary \ref{coro2.5} implies
 $$
 \mathcal{GY}(CP^2, \mathcal{C}_{FS})=\mathcal{F}_f(CP^2,g_{FS})=0.
 $$
 Then we have $\mathcal{GY}(CP^2)\geq 0.$ Combining this with Theorem \ref{t1.1}, we have $\mathcal{GY}(CP^2)=0.$
\vskip 0.3cm
  iii) $\mathcal{GY}(T^4)=0.$
  The same argument as ii).
\vskip 0.3cm
  iv) Let $M$ be
  a compact complex hyperbolic manifold of real dimension 4, equipped with the Bergman metric $g_{ch}.$
  Then $\mathcal{GY}(M)=\mathcal{Y}(M)=\mathcal{F}(M,g_{ch}).$
  Indeed, C. Lebrun \cite{Leb} has shown that the negative K$\ddot{a}$hler-Einstein metrics (on real dimension 4) achieve the Yamabe invariant!
  The metrics achieving the Yamabe invariant are called \emph{supreme Einstein} in his paper.
  Unfortunately, we still do not know whether the 4-dimensional real hyperbolic metric is \emph{supreme Einstein}, whereas it is the case in dimension 3.
\vskip 0.3cm
v)  $\mathcal{GY}(k_1CP^2 \#  k_2\bar{CP}^2 \#  k_3 T^4\# k_4 S^3\times S^1)=0,$ where integers $k_i\geq 0,$ $k_1+k_2+k_3\geq 1.$

\vskip 0.5cm
\begin{remark} There are many other possible generalizations of the Yamabe invariant for oriented 4-manifolds. For instance, associated to every modified scalar curvature (introduced  in \cite{GLe}) of the form $R_g-f$ with a function $f$ of conformal weight $-2$, one has a modified Yamabe invariant. Let us consider another special case that  $\sigma_g^+=R_g-6\lambda_{\max}(W_{+})$. The corresponding invariant is denoted by
$$\mathcal{GY}_{+}(M,\mathcal{C})=\inf_{\tilde{g}\in \mathcal{C}}\frac{\int_{M}\sigma_{\tilde{g}}^+}{V_{\tilde{g}}^{\frac{1}{2}}}, \ \ \ \ \ \mathcal{GY}_{+}(M)=\sup_{\mathcal{C}}\mathcal{GY}_{+}(M,\mathcal{C}).$$
As before, $\mathcal{GY}_{+}(M)>0$ if and only if $M$ admits a metric such that $P_{+}=\frac{R}{6}I_{\Lambda^2_{+}}-W_{+}>0$ in the notation of Micallef and Wang \cite{MW}. A consequence of  Bochner formula argument of Theorem 2.1(a) in Micallef and Wang \cite{MW} is that  $\mathcal{GY}_{+}(M)>0$ implies $b_{2}^{+}(M)=0.$

We remark that  the same conclusion ($b_{2}^{+}=0$) was also obtained by M. Gursky (\cite{Gur} Theorem 3.3), where $R_g-6\lambda_{\max}(W_{+})$ is replaced by $R_g-2\sqrt{6}|W_{+}|.$

Except for the manifolds with positive isotropic curvature, $\bar{CP}^2$ is a good example with $\mathcal{GY}_+>0.$
  By Lemma \ref{2.6}, one can show $$\mathcal{GY}_{+}(l_1 \bar{CP}^2\# l_2 S^3\times S^1)>0,$$ and $$\mathcal{GY}_{+}(l_1 \bar{CP}^2\# l_2 CP^2 \# l_3 T^4 \# l_4 K3\# l_5 S^3\times S^1 )=0,$$ where integers $l_i\geq 0$ and $l_2+l_3+l_4>0.$

  Of course, $\mathcal{GY}_{-}$ can also be defined. Actually reversing the orientation interchanges $\mathcal{GY}_{-}$ and $\mathcal{GY}_{+}.$

 In view of Theorem \ref{t1.1}, a natural question may be raised that whether  an oriented 4-manifold with positive $\mathcal{GY}_+$ is  a connected sum of several  $\bar{CP}^2$s and a manifold with positive $\mathcal{GY}$ (appeared  in our Theorem \ref{t1.1}).

\end{remark}
\section{Appendix}
The proof of Lemma \ref{2.6} is analogous to the Yamabe invariant case. Our argument follows the line of \cite{K} with slight modifications.
\begin{proof} of lemma \ref{2.6}. Note that the right hand side of the desired inequality is precisely $\mathcal{Y}_{f}(M_1\sqcup M_2),$ where $M_1\sqcup M_2$ is the disjoint union of $M_1$ and $M_2.$

For any fixed small $\varepsilon>0,$ take a conformal class $\tilde{\mathcal{C}}$ on $M_1\sqcup M_2,$ such that $$\mathcal{Y}_f(M_1\sqcup M_2,\tilde{\mathcal{C}})>\mathcal{Y}_f(M_1\sqcup M_2)-\frac{\varepsilon}{4}.$$
Pick $P_1\in M_1, P_2\in M_2,$
and perturb $\tilde{\mathcal{C}}$ a little so that the obtained conformal class $\mathcal{C}$ is conformal flat near $P_1$ and $P_2,$  and
\be\label{pur}
\mathcal{Y}_f(M_1\sqcup M_2,{\mathcal{C}})>\mathcal{Y}_f(M_1\sqcup M_2)-\frac{\varepsilon}{2}.
\ee
The argument is the following. Let $\tilde{g}\in \tilde{\mathcal{C}}$ be a fixed metric, take normal coordinate system of radius
$\delta$ near $P_1$(and $P_2$) so that $\tilde{g}_{ij}=\delta_{ij}+\eta_{ij}$ where $\eta_{ij}=O(r^2).$
 Let $\xi:\mathbb{R}\rightarrow\mathbb{R}$ be a function satisfying $0\leq \xi\leq 1,$ $\xi\mid_{(-\infty,\frac{1}{2}]}=0,$
  $\xi\mid_{[1,\infty)}=1.$ We construct a new metric $g=\tilde{g}-(1-\xi(\frac{r}{\delta}))\eta.$ Then $g$ is flat on  balls of radius $\frac{\delta}{2}$ centered at $P_1$ and $P_2,$ and $g$ coincides $\tilde{g}$ outside $B(P_1,{\delta}{})\cup B(P_2,{\delta})$ and $$
  |g-\tilde{g}|_{\tilde{g}}<C\delta^2, \ \ \ |Rm(g)|_{B(P_1,{\delta}{})\cup B(P_2,{\delta})}<C,
  $$
  where $C$ is independent of $\delta.$ It is now clear that for any function $u,$ we have
  $$
  (1-C\delta^2)\frac{\int |\nabla u|^2_{\tilde{g}} dv_{\tilde{g}}}{(\int u^{\frac{2n}{n-2}} dv_{\tilde{g}})^{\frac{n-2}{n}}}\leq \frac{\int |\nabla u|^2_g dv_g}{(\int u^{\frac{2n}{n-2}} dv_g)^{\frac{n-2}{n}}}\leq(1+C\delta^2)\frac{\int |\nabla u|^2_{\tilde{g}} dv_{\tilde{g}}}{(\int u^{\frac{2n}{n-2}} dv_{\tilde{g}})^{\frac{n-2}{n}}},
  $$
  and \be \label{2.9} \mid\int(R_g-f(W_g))u^2dv_g-\int(R_{\tilde{g}}-f(W_{\tilde{g}}))u^2dv_{\tilde{g}}\mid\leq C(\int u^{\frac{2n}{n-2}}dv_{\tilde{g}})^{\frac{n-2}{n}}\delta^{2},\ee

  \be \label{2.10} \mid(\int u^{\frac{2n}{n-2}} dv_{\tilde{g}})^{\frac{n-2}{n}}-(\int u^{\frac{2n}{n-2}} dv_{{g}})^{\frac{n-2}{n}}\mid\leq C(\int u^{\frac{2n}{n-2}}dv_{\tilde{g}})^{\frac{n-2}{n}}\delta^{2}.\ee
    This implies

    \be \label{2.11}\begin{split}
   &\mid\frac{\int(R_g-f(W_g))u^2+4\frac{n-1}{n-2}|\nabla u|^2dv_g}{(\int u^{\frac{2n}{n-2}}dv_g)^{\frac{n-2}{n}}}-
   \frac{\int(R_{\tilde{g}}-f(W_{\tilde{g}}))u^2+4\frac{n-1}{n-2}|\nabla u|^2dv_{\tilde{g}}}{(\int u^{\frac{2n}{n-2}}dv_{\tilde{g}})^{\frac{n-2}{n}}}\mid\\
   & \leq C\delta^2\frac{\int|\nabla u|^2dv_{\tilde{g}}}{(\int u^{\frac{2n}{n-2}}dv_{\tilde{g}})^{\frac{n-2}{n}}}+C\delta^2.
   \end{split}\ee

    Hence we get $$\mathcal{Y}_f(\mathcal{C})\leq \mathcal{Y}_f(\tilde{\mathcal{C}})+C\delta^2$$ immediately.
    Since the curvature of $g$ is uniformally bounded (independent of $\delta$), a minimizing sequence $u_i$ for $\mathcal{Y}_f(\mathcal{C})$ must have
    $$\frac{\int |\nabla u_i|^2_{\tilde{g}} dv_{\tilde{g}}}{(\int u^{\frac{2n}{n-2}}dv_{\tilde{g}})^{\frac{n-2}{n}}}\leq C,$$
     therefore
     $$ \mathcal{Y}_f(\tilde{\mathcal{C}})-C\delta^2\leq \mathcal{Y}_f(\mathcal{C})\leq \mathcal{Y}_f(\tilde{\mathcal{C}})+C\delta^2.
    $$
    In particular, choosing small $\delta,$ $(\ref{pur})$ follows.   Now conformally scaling the metric $g,$ so that the obtained metric $\bar{g}=e^{-\xi(\frac{2r}{\delta}) \log r^2}$ turns  neighborhoods of $P_1$ and $P_2$ into two infinite half cylinder $S^{n-1}\times [0,\infty).$
    Denote the complement of the two half cylinders by $(M^0,g_0),$ here $M_0=M^0_1\sqcup M^0_2$ has two connected components .
    For large $l>0,$ truncating the two $S^{n-1}\times [l,\infty)$ and gluing along $ S^{n-1}\times [0,l],$ we get a Riemannian manifold $({M}_l,{g}_l),$
    where $M_l=M^0_1\cup S^{n-1}\times [0,l]\cup M^0_2.$ Let $\mathcal{C}_l=[g_l]$ be the conformal class of $g_l.$
    By the definition of
$\mathcal{Y}_f({\mathcal{C}}),$  there is a smooth positive function $u_l$ on $M_l$ such that
    \be
\label{dadao}\int (R_g-f(W_g))u^2+4\frac{n-1}{n-2}|\nabla u|^2
dv_{g_l}<\mathcal{Y}_f(M_l,C_l)+\frac{\varepsilon}{2},\ \ \ \ \ \
\int u_l^{\frac{2n}{n-2}}dv_{g_l}=1. \ee
    By mean value theorem, it is clear that there is  a $t_l\in [0,l]$ such that
\be \label{et2}
    \int_{S^{n-1}\times\{t_l\}}|du_l|^2+u_l^2dv_{S^{n-1}} \leq \frac{C}{l}
    \ee
    where $C$ is independent of $l.$
     Cut off $M_l$ along the section $S^{n-1}\times \{t_l\},$ and attach two half infinite cylinders to it again, we get $ M_1\backslash\{P_1\} \sqcup M_2\backslash\{P_2\} .$ Extending the function $u_l$ over to the two half cylinders, the resulting function is denoted by $U_l$ is given by
    \begin{equation*}U_l(x,t)=
\left\{
\begin{split}
  & (1-t)u_l\mid_{S^{n-1}\times\{t_l\}}(x,t_l)\ \ \ \ \mbox{for} \ \ \ (x,t)\in S^{n-1}\times [0,1]\\
  & 0 \ \ \ \ \ \ \ \ \ \ \ \ \ \ \ \ \ \ \ \ \ \ \ \ \mbox{for} \ \ \ \ \ (x,t)\in S^{n-1}\times [1,\infty) .
  \end{split}
 \right.
\end{equation*}
From (\ref{dadao}), (\ref{et2}) and (\ref{pur}), we have
\be
    \label{dadao1}\int  (R_g-f(W_g))U_l^2+4\frac{n-1}{n-2}|\nabla U_l|^2
dv_{g}<\mathcal{Y}_f(M_l,\mathcal{C}_l)+\frac{\varepsilon}{2}+\frac{C}{l},\ \ \ \ \ \
\int_{}U_l^{\frac{2n}{n-2}}dv_{g}>1,\ee
hence
\be (\mathcal{Y}_f(M_1\sqcup M_2)-\frac{\varepsilon}{2})(\int U_l^{\frac{2n}{n-2}}dv_{g})^{\frac{n-2}{n}} \leq {\mathcal{Y}_f(M_{l},\mathcal{C}_l)}+\frac{C}{l}+\frac{\varepsilon}{2}.\ee
 By Sobolev imbedding theorem, we have
  $$ \parallel u_l\parallel_{L^{\frac{2(n-1)}{n-1-2}}(S^{n-1}\times \{t_l\})} \leq C\parallel u_l\parallel_{W^{1,2}(S^{n-1}\times \{t_l\})}\leq \frac{C}{\sqrt{l}},$$
  and $$1<\int U_l^{\frac{2n}{n-2}}dv_{g} \leq 1+\frac{C}{l^{\frac{n}{n-2}}}.$$
  Consequently, we have
\be \mathcal{Y}_f(M_1\sqcup M_2)\leq {\mathcal{Y}_f(M_{l},\mathcal{C}_l)}+\frac{C}{l}+2\varepsilon\leq {\mathcal{Y}_f(M_1\#M_2)}+\frac{C}{l}+2\varepsilon.\ee

 The result follows by taking $l\rightarrow \infty$ and  $\varepsilon \rightarrow 0.$\end{proof}



\begin{thebibliography}{99}
\bibitem{A} T. Aubin,  {\sl The scalar curvature,} Differential Geometry and Relativity, M. Cahen, M. Flato(eds.), 5-18. Dordrecht, Reidel 1976.

\bibitem{CGY} S. Y. A. Chang, M. Gursky and P. Yang, {\sl A conformally invariant sphere theorem in four
dimensions,}  Publ. Math. Inst. Hautes ¡äEtudes Sci. No.\textbf{ 98}, (2003), 105-143.

\bibitem{CGY1} S. Y. A. Chang, M. Gursky and P. Yang, {\sl An equation of Monge-Ampere type in conformal geometry,
and four-manifolds of positive Ricci curvature, } Annals of Math., \textbf{155}(2002), 711-789.

\bibitem{CGY2} S. Y. A. Chang, M. Gursky and P. Yang, {\sl An a priori estimate for a fully nonlinear equation on four-manifolds,}
 J. D$'$Analyse Math., \textbf{87} (2002),  151-186.

\bibitem{CZ} B. L. Chen and X. P. Zhu, {\sl Ricci flow with surgery on four-manifolds
 with positive isotropic
curvature}, J. Diff. Geom. {\bf 74} (2006), 177-264.
\bibitem{CTZ}B. L. Chen, S. H. Tang and X. P. Zhu,  {\sl Complete classification of compact four manifolds with positive isotropic curvature,} J.  Diff.  Geom., \textbf{91} (2012), 41-80.

    \bibitem{Gur} M. Gursky, {\sl Four manifolds with $\delta W^{+}=0$ and Einstein constants of the sphere,} Math. Ann. \textbf{318}(2000),  417-431.


\bibitem{GLe} M. Gursky and C. Lebrun,  {\sl Yamabe invariants and spin$^{c}$ structures,} Geom. Funct. Anal., \textbf{8} (1998), no.6, 965-977.

\bibitem{H}  R. S. Hamilton, {\sl Four-manifolds with positive isotropic curvatures}, Comm. in Analysis and Geometry, 5(1997), 1-92.

\bibitem{K} O. Kobayashi,  {\sl Scalar curvature of metric of unit volume,} Math. Ann., \textbf{279}, 253-265,1987.

\bibitem{Leb} C. Lebrun,  {\sl Weyl curvature, Einstein metrics, and the Seiberg-Witten equations,} Mathematical Research Letters, \textbf{5}, 423-438,1998.
and G. Siopsis, eds. AMS/International press, 1999.


\bibitem{Le} C. Lebrun,  {\sl Einstein metrics and the Yamabe problem,} Trends in mathematical physics, V.Alexiads
and G. Siopsis, eds. AMS/International press, 1999.

\bibitem{Liu} A. K. Liu, {\sl Some new applications of general wall crossing formula, Gompf's conjecture and its applications,} Mathematical Research Letters \textbf{3}(1996), 569-585.

   \bibitem{Mar} C. Margerin, {\sl  A sharp characterization of the smooth $4-$sphere in curvature terms,} Comm. Anal. Geom., \textbf{6}(1998), 21-65.

\bibitem{MW} M. Micallef and M. Wang, {\sl Metrics with nonnegative isotropic curvatures}, Duke Math. J., 72(1993), no. 3, 649-672.

\bibitem{OO}  H. Ohta and K. Ono,. {\sl Notes on symplectic 4-manifolds with $b^{+}_2= 1$,} II. Internat. J. Math. \textbf{7}(1996), 755-770.

\bibitem{P2}  G. Perelman, {\sl Ricci flow with surgery on three
manifolds,}  arXiv: math. DG/0303109.

\bibitem{Sch} R. Schoen, {\sl Conformal deformation of  a Riemannian metric to constant scalar curvature,} J. Diff. Geom., \textbf{20},479-496(1984).

\bibitem{SchoenYau} R. Schoen and S. T. Yau, {\sl Lectures on differential geometry,} in conference proceedings and Lecture Notes in Geometry and Topology, Volume \textbf{1}, International Press Publications, 1994.

\bibitem{SY} R. Schoen and S. T. Yau, {\sl Existence of incompressible minimal surfaces and the topology of three dimensional manifolds of positive scalar curvature,} Ann. of Math., \textbf{110},1979,127-142.

\bibitem{SY2} R. Schoen and S. T. Yau, {\sl On the structure of manifolds with positive scalar curvature,} Manu. Math., \textbf{28},1979, 159-184.

\bibitem{Y} H. Yamabe, {\sl On a deformation of Riemannian structures on compact manifolds}, Osaka Math. J., 12(1960), 21-37


\end{thebibliography}
\end{document}